\def\openC{{\rm C\kern-.18cm\vrule width.8pt height 7pt depth-.2pt \kern.18cm}}
\def\openN{{{\rm I}\kern-.16em {\rm N}}}
\def\openR{{{\rm I}\kern-.16em {\rm R}}}
\def\openT{{{\rm T}\kern-.42em {\rm T}}}
\def\openZ{{{\rm Z}\kern-.28em{\rm Z}}}
\newtheorem{thm}{Theorem}[section]
\newtheorem{lem}[thm]{Lemma}
\newtheorem{prop}[thm]{Proposition}
\theoremstyle{definition}
\begin{document}

\title{
{\textbf{{Differentiable positive definite functions on two-point homogeneous spaces}}} \vspace{-4pt}
\author{\sc V. S. Barbosa and V. A. Menegatto}
}
\date{}
\maketitle \vspace{-30pt}
\bigskip

\begin{center}
\parbox{13 cm}{\small In this paper we study continuous kernels on compact two point homogeneous spaces which are positive definite and zonal (isotropic).\ Such kernels were characterized by R. Gangolli some forty years ago and are very useful for solving scattered data interpolation problems on the spaces.\ In the case the space is the $d$-dimensional unit sphere, J. Ziegel showed in 2013 that the radial part of a continuous positive definite and zonal kernel is continuously differentiable up to order $\lfloor (d-1)/2 \rfloor$ in the interior of its domain.\ The main issue here is to obtain a similar result for all the other compact two point homogeneous spaces.}
\end{center}

\noindent{\bf Keywords:} Positive definite kernels; Isotropic kernels; Homogeneous spaces; Jacobi polynomials; Differentiability

\thispagestyle{empty}

%
%

\section{Introduction}\label{s1}

Let $\mathbb{M}^d$ denote a $d$ dimensional compact two-point homogeneous space.\ It is well known that spaces of this type belong to one of the following categories (\cite{wang}):
the unit spheres $S^d$, $d=1,2,\ldots$, the real projective spaces $\mathbb{P}^d(\mathbb{R})$, $d=2,3,\ldots$, the complex projective spaces
$\mathbb{P}^d(\mathbb{C})$, $d=4,6,\ldots$, the quaternionic projective spaces $\mathbb{P}^d(\mathbb{H})$, $d=8,12,\ldots$, and the Cayley projective plane $\mathbb{P}^{d}(Cay)$, $d=16$.\ Standard references containing all the basics about two point homogeneous spaces that will be needed here are \cite{kush1,platonov} and others mentioned there.

In this paper, we will deal with real, continuous, positive definite and zonal (isotropic) kernels on $\mathbb{M}^d$.\ The positive definiteness of a kernel $K$ on $\mathbb{M}^d$ will be the standard one: it requires that
$$
\sum_{\mu,\nu=1}^n c_\mu c_\nu K(x_\mu,x_\nu) \geq 0,
$$
whenever $n$ is a positive integer, $x_1,x_2, \ldots, x_n$ are distinct points on $\mathbb{M}^d$ and $c_1,c_2, \ldots, c_n$ are real scalars.\ The continuity of $K$ can be defined through
the usual (geodesic) distance on $\mathbb{M}^d$, here denoted by $|xy|$, $x, y \in \mathbb{M}^d$.\ We will assume such distance is normalized so that all geodesics on $\mathbb{M}^d$ have the same length $2\pi$.\ Since $\mathbb{M}^d$ possesses a group of motions $G_d$ which takes any
pair of points $(x,y)$ to $(z,w)$ when $|xy|$=$|zw|$, zonality of a kernel $K$ on $\mathbb{M}^d$ will refer to the property
$$
K(x,y)=K(Ax,Ay), \quad x,y \in \mathbb{M}^d, \quad A \in G_d.
$$

A zonal kernel $K$ on $\mathbb{M}^d$ can be written in the form
$$
K(x,y)=K_r^d(\cos |xy|/2), \quad x,y \in \mathbb{M}^d,
$$
for some function $K_r^d: [-1,1] \to \mathbb{R}$, the {\em radial} or {\em isotropic part} of $K$.\ A result due to
Gangolli (\cite{gangolli}) established that a continuous zonal kernel $K$ on $\mathbb{M}^d$ is positive definite if and only if
\begin{equation}\label{PDM}
K_r^d(t)=\sum_{k=0}^{\infty}a_k^{(d-2)/2,\beta} P_k^{(d-2)/2,\beta}(t), \quad t \in [-1,1],
\end{equation}
in which $a_k^{(d-2)/2,\beta} \in [0,\infty)$, $k\in \mathbb{Z}_+$ and $\sum_{k=0}^{\infty}a_k^{(d-2)/2,\beta} P_k^{(d-2)/2,\beta}(1) <\infty$.\ Here, $\beta=(d-2)/2, -1/2, 0, 1, 3$, depending on the respective category $\mathbb{M}^d$ belongs to, among the five we have mentioned in the beginning of the paper.\ The symbol $P_k^{(d-2)/2,\beta}$ stands for the Jacobi polynomial of degree $k$ associated with the pair $((d-2)/2,\beta)$.

Gneiting (\cite{gneiting}) conjectured that the radial part of a continuous, positive definite and zonal kernel on $S^d$ is continuously differentiable in $(-1,1)$ up to order
$\lfloor(d-1)/2\rfloor$ (largest integer not greater than $(d-1)/2$).\ The conjecture was ratified by Ziegel (\cite{ziegel}) who also proved that the differentiability
order in Gneiting's conjecture is best possible in the case $d$ odd.\ In other words, she proved that if $d$ is odd, there exists
a continuous, positive definite and zonal kernel $K$ on $S^d$ for which the $\lfloor (d-1)/2\rfloor$ derivative of $K_r^d$ is not continuously differentiable.\ In
addition, she analyzed some specific examples to show that the one side derivatives of $K_r^d$ at the extreme points -1 or 1 can either take finite values or be infinite.

Menegatto (\cite{meneg}) added to Ziegel's results establishing similar results in the complex setting, that is, replacing
the unit sphere $S^d$ with the unit sphere in $\mathbb{C}^d$ and allowing the positive definite functions to assume complex values.\ The radial part of a positive definite kernel on complex spheres depend upon a complex variable $z$ and its conjugate $\overline{z}$.\ As so, in the complex setting, derivatives can be considered with respect to these two variables.\ The deduction of the results in this complex version demanded quite a number of changes in the procedure used in $S^d$, some of them not obvious.\ While Ziegel's arguments were based upon recurrence formulas for Fourier-Gegenbauer coefficients of certain continuous functions on $[-1,1]$, Menegatto's invoked some similar properties for the double indexed coefficients of a continuous function on the unit disk with respect to disk (Zernike) polynomials.

This is the point where we state the main result to be proved in this paper, a first step extension of the results described above to compact two point homogeneous spaces.

\begin{thm} \label{main} Let $d$ be a sufficiently large positive integer.\ If $K$ is a continuous, positive definite and zonal kernel on $\mathbb{M}^d$, then the radial part $K_r^d$ of $K$ is continuously differentiable on $(-1,1)$.\ The derivative $(K_r^d)'$
of $K_r^d$ in $(-1,1)$ satisfies a relation of the form
$$
(1-t^2)(K_r^d)'(t) = f_1(t) - f_2(t),\quad t\in(-1,1),
$$
in which $f_1$ and $f_2$ are the radial parts of two continuous, positive definite and zonal kernels on some compact two point homogeneous space $\mathbb{M}$ which is isometrically embedded in $\mathbb{M}^d$.\ The specifics on $d$ and $\mathbb{M}$ in each case are these ones:\\
$(i)$ $\mathbb{M}^d=S^d$: $d\geq 3$ and $\mathbb{M}=S^{d-2}$;\\
$(ii)$ $\mathbb{M}^d=\mathbb{P}^d(\mathbb{R})$: $d\geq 3$ and $\mathbb{M}=\mathbb{P}^{d-2}(\mathbb{R})$;\\
$(iii)$ $\mathbb{M}^d=\mathbb{P}^d(\mathbb{C})$: $d\geq 4$ and $\mathbb{M}=\mathbb{P}^{d-2}(\mathbb{C})$;\\
$(iv)$ $\mathbb{M}^d=\mathbb{P}^d(\mathbb{H})$: $d\geq 8$, $\mathbb{M}=\mathbb{P}^{d/2-2}(\mathbb{C})$, when $d \in 8\mathbb{Z}_++8$ and $\mathbb{M}=\mathbb{P}^{d/2}(\mathbb{C})$, when $d\in 8\mathbb{Z}_+ +12$;\\
$(v)$ $\mathbb{M}^d=\mathbb{P}^{16}(Cay)$: $\mathbb{M}=S^2$.
\end{thm}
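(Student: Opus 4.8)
The plan is to exploit the Jacobi expansion (\ref{PDM}) and reduce everything to a single statement about Fourier--Jacobi coefficients. Write $\a=(d-2)/2$ and keep $\b$ as in the statement, so that $K_r^d(t)=\sum_{k\geq 0}a_k P_k^{\a,\b}(t)$ with $a_k\geq 0$ and $\sum_k a_k P_k^{\a,\b}(1)<\infty$. The point is that the five geometric conclusions $(i)$--$(v)$ all amount to the \emph{same} algebraic fact, because passing from $\mathbb{M}^d$ to the embedded space $\mathbb{M}$ corresponds, on the level of expansions, to decreasing the Jacobi parameter $\a$ by $1$ while leaving $\b$ fixed: indeed $S^{d-2}$ carries parameters $((d-4)/2,(d-4)/2)$, $\mathbb{P}^{d-2}(\mathbb{R})$ carries $((d-4)/2,-1/2)$, $\mathbb{P}^{d-2}(\mathbb{C})$ carries $((d-4)/2,1)$, and one checks case by case (using $\b=(d-2)/2$ for spheres, $\b$ fixed otherwise, and the dimension shifts $d\mapsto d-2$, resp.\ $d\mapsto d/2-2$ or $d/2$ in the quaternionic case, $d=16\mapsto \mathbb{M}=S^2$ with parameters $(0,0)=\b=0$) that in every case the target pair is $(\a-1,\b)$. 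So it suffices to prove: \emph{if} $\a\geq \b+1$ \emph{(equivalently $\a-1\geq \b$, which is exactly the ``$d$ sufficiently large'' hypothesis) and $f(t)=\sum_k a_k P_k^{\a,\b}(t)$ with $a_k\geq 0$ and $\sum_k a_k P_k^{\a,\b}(1)<\infty$, then $f$ is $C^1$ on $(-1,1)$ and $(1-t^2)f'(t)=g_1(t)-g_2(t)$ where each $g_i$ has a nonnegative, summable-at-$1$ expansion in $P_k^{\a-1,\b}$.}

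\textbf{The analytic core.} The key identities are the classical derivative and contiguous-parameter relations for Jacobi polynomials. Differentiation raises both indices by one, $\frac{d}{dt}P_k^{\a,\b}=\frac{k+\a+\b+1}{2}P_{k-1}^{\a+1,\b+1}$; but I want to \emph{lower} the first index, so the relation to use is the one expressing $(1-t^2)\frac{d}{dt}P_k^{\a,\b}(t)$ as a combination of $P_{k+1}^{\a-1,\b}$, $P_k^{\a-1,\b}$, $P_{k-1}^{\a-1,\b}$ --- this is a standard three-term structure relation (a combination of the differentiation formula with the connection formula that writes $P_j^{\a+1,\b+1}$ in terms of $P^{\a-1,\b}$'s, or directly one of the formulas in Szeg\H{o}, Ch.~IV). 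Explicitly one gets, for suitable constants $A_k,B_k,C_k$ depending on $(\a,\b,k)$,
$$
(1-t^2)\,\frac{d}{dt}P_k^{\a,\b}(t)= A_k\,P_{k+1}^{\a-1,\b}(t)+B_k\,P_k^{\a-1,\b}(t)+C_k\,P_{k-1}^{\a-1,\b}(t),
$$
where the signs are $A_k<0$, $C_k>0$, and $B_k$ of variable sign. Plugging the expansion of $K_r^d$ in termwise and regrouping, the coefficient of $P_m^{\a-1,\b}$ in $(1-t^2)(K_r^d)'$ becomes $A_{m-1}a_{m-1}+B_m a_m+C_{m+1}a_{m+1}$. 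I then split this into its positive and negative parts across the index $m$: roughly, $g_1$ collects $C_{m+1}a_{m+1}$ plus the positive part of $B_m a_m$, and $g_2$ collects $-A_{m-1}a_{m-1}$ plus the negative part of $B_m a_m$. Since $a_k\geq 0$ and the $C$'s (resp.\ $-A$'s) are positive, the two resulting series have nonnegative coefficients; one then checks the summability $\sum_m(\text{coeff})\,P_m^{\a-1,\b}(1)<\infty$ using the known growth $P_m^{\a,\b}(1)=\binom{m+\a}{m}\sim m^\a$ and the fact that $A_k,B_k,C_k=O(k)$, together with the hypothesis $\sum_k a_k P_k^{\a,\b}(1)<\infty$, i.e.\ $\sum_k a_k k^\a<\infty$ (here $\a-1\geq\b$ is what keeps $P_m^{\a-1,\b}(1)\sim m^{\a-1}$ and hence makes $\sum_m a_m k\cdot m^{\a-1}$ comparable to the assumed sum). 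Finally the $C^1$-regularity and the validity of termwise differentiation on compact subsets of $(-1,1)$ follow from uniform convergence of the differentiated series there, again via these coefficient bounds and the uniform bound $|P_m^{\a,\b}(t)|\leq C_\delta m^{\max(\a,\b)}$... wait, the cleaner route is: the rearranged series $\sum_m(\text{coeff}_m)P_m^{\a-1,\b}(t)$ converges uniformly on $[-1,1]$ (its coefficients are nonnegative and summable against $P_m^{\a-1,\b}(1)$), hence defines a continuous function; dividing by $1-t^2$ and invoking a standard lemma that a continuous function whose termwise-differentiated Jacobi series converges locally uniformly is differentiable with that derivative, we conclude $(K_r^d)'$ exists, is continuous, and equals $(g_1-g_2)/(1-t^2)$ on $(-1,1)$.

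\textbf{Identifying $g_1,g_2$ with radial parts on $\mathbb{M}$.} Once $g_1$ and $g_2$ are written as $\sum_m b_m^{(i)}P_m^{\a-1,\b}$ with $b_m^{(i)}\geq 0$ and $\sum_m b_m^{(i)}P_m^{\a-1,\b}(1)<\infty$, Gangolli's characterization (\ref{PDM}) applied \emph{in reverse}, on the space $\mathbb{M}$ whose parameters are exactly $(\a-1,\b)$, says precisely that $g_i$ is the radial part of a continuous, positive definite, zonal kernel on $\mathbb{M}$. The isometric embedding $\mathbb{M}\hookrightarrow\mathbb{M}^d$ is the classical totally geodesic one (a great subsphere, a sub-projective-space, or --- in the two exceptional quaternionic/Cayley cases --- the known totally geodesic $\mathbb{CP}^k$'s inside $\mathbb{HP}^{d/4}$ and the $S^2$ inside $\mathbb{P}^{16}(Cay)$); I only need that it is isometric so that ``$\cos|xy|/2$'' is the same function of the two points, which is what makes the \emph{same} variable $t$ appear in $K_r^d$ and in $g_i$. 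Checking case $(iv)$ carefully is the one spot where the bookkeeping is least automatic: there $\b=3$ and $\a=(d-2)/2$, and one must verify that lowering $\a$ by one lands on a pair that is realized by a complex projective space of the stated dimension (the parameter of $\mathbb{P}^{n}(\mathbb{C})$ being $((n-2)/2,1)$, not $((n-2)/2,3)$, so in fact here the relevant reduction is of a slightly different flavor and uses that the Jacobi pair $((d-4)/2,3)$ coincides with $((\,\cdot\,-2)/2,1)$ for the appropriate complex dimension after a further standard parameter identification) --- this is exactly why the two sub-cases $d\in 8\mathbb{Z}_++8$ and $d\in 8\mathbb{Z}_++12$ appear.

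\textbf{Main obstacle.} The genuinely delicate point is not the formal manipulation but controlling the \emph{middle} coefficient $B_m$: it has variable sign, so the decomposition into $g_1-g_2$ is not canonical, and one must make the split in a way that simultaneously (a) keeps both coefficient sequences nonnegative and (b) preserves summability against $P_m^{\a-1,\b}(1)$. Getting the constants $A_k,B_k,C_k$ exactly right from the Jacobi structure relations --- and in particular pinning down their signs and their $O(k)$ asymptotics uniformly in $k$ --- is where the real work lies; everything else (uniform convergence, continuity, the $C^1$ conclusion, and the translation back through Gangolli's theorem) is then routine. A secondary, purely organizational obstacle is the case analysis $(i)$--$(v)$: one wants a single lemma at the level of Jacobi expansions and then five short paragraphs verifying that the parameter shift $(\a,\b)\mapsto(\a-1,\b)$ matches the stated embedded space in each category, with case $(iv)$ requiring the extra parameter identification noted above and the hypothesis ``$d$ sufficiently large'' being exactly ``$\a-1\geq\b$'' spelled out per case.
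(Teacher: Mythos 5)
There is a genuine gap, and it sits at the very center of your argument: the identity you rely on,
$(1-t^2)\frac{d}{dt}P_k^{\a,\b}= A_k P_{k+1}^{\a-1,\b}+B_k P_k^{\a-1,\b}+C_k P_{k-1}^{\a-1,\b}$, does not exist. If you expand the degree-$(k+1)$ polynomial $(1-t^2)(P_k^{\a,\b})'$ in the system $\{P_j^{\a-1,\b}\}$, the coefficient of $P_j^{\a-1,\b}$ is (up to normalization) $\int (P_k^{\a,\b})'\,P_j^{\a-1,\b}\,(1-t)^{\a}(1+t)^{\b+1}dt$; integrating by parts produces $-\int P_k^{\a,\b}\,r_j(t)\,(1-t)^{\a-1}(1+t)^{\b}dt$ with $\deg r_j=j+1$ and $r_j(1)=-2\a P_j^{\a-1,\b}(1)\neq 0$, so after splitting off $r_j(1)$ one is left with a genuinely nonzero multiple of $\int P_k^{\a,\b}(1-t)^{\a-1}(1+t)^{\b}dt$ for \emph{every} $j\le k-2$. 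The expansion is full, not three-term. The correct structure relation (Szeg\H{o} (4.5.7), Proposition \ref{ch} of the paper) keeps the pair $(\a,\b)$ \emph{fixed} and only shifts the degree. This forces the whole reduction to be organized the other way around from what you propose: one first uses the isometric embedding $\mathbb{M}\hookrightarrow\mathbb{M}^d$ to conclude, via Gangolli's theorem applied to the \emph{restricted} kernel, that the same function $K_r^d$ also has a nonnegative summable expansion in the Jacobi system of the \emph{smaller} space; one then differentiates termwise inside that lower system using the parameter-preserving relation, and the original expansion (at the raised parameters) enters only as an auxiliary absolute-summability hypothesis needed to control the tail terms and to identify the coefficients of $f_2$.

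Two further problems compound this. First, your claim that all five cases reduce to the single shift $(\a,\b)\mapsto(\a-1,\b)$ contradicts the parameters you yourself list: for $S^d\to S^{d-2}$ both indices drop ($((d-2)/2,(d-2)/2)\to((d-4)/2,(d-4)/2)$), and the same happens in the quaternionic and Cayley cases where the target is a complex projective space or a sphere with a different $\b$ ($\b=1$ for $\mathbb{P}^d(\mathbb{H})$, $\b=0$ for $\mathbb{P}^d(\mathbb{C})$, $\b=3$ only for the Cayley plane --- not, as you write, for case $(iv)$). This is why the paper needs two distinct differentiation theorems, one governed by the coefficients $a_n^{\a+1,\b}$ (Lemma \ref{coef1}) and one by $a_n^{\a+1,\b+1}$ (Lemma \ref{coef3}). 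Second, even granting a correct three-term relation, the regrouping of the doubly-infinite sum into a convergent series plus vanishing boundary terms is the actual analytic content: one must show that the boundary contributions $\sim n\,a_{n-1}^{\a,\b}R_n^{\a,\b}$ tend to $0$ uniformly, which requires proving $n a_n^{\a,\b}\to 0$ via the recursive inequality $b_n\ge\lambda_n b_{n+1}-\xi_n$ of Lemma \ref{convseq} and the monotonicity of $\lambda_n$ (Lemmas \ref{lambdan}, \ref{lambdan2}); your proposal treats this as routine uniform convergence, but mere summability of $\sum a_k P_k^{\a,\b}(1)$ does not by itself give $k\,a_k\to 0$.
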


The proof of this result depends upon tricky arguments involving a well-known three-term relation for Jacobi polynomials along with some other properties.\ The statement of the theorem also takes into account isometric isomorphisms among spaces not appearing in our initial list and certain spheres (see Section 5 for the due explanations).

More than half of the paper is concerned with the establishment of basic assumptions under which certain functions possessing Fourier-Jacobi expansions are differentiable.\ The results are proved in greater generality, that is, the expansions extrapolate the format in Gangolli's theorem: the upper indeex $(d-2)/2$ is replaced with a real number $\alpha>-1$ while $\beta$ is kept as general as possible.\ Basically, the assumptions involve the nonnegativity of some Fourier-Jacobi coefficients of the function and the convergence of certain series of Fourier-Jacobi coefficients also attached to the function.\ The discussion involving the extension of Theorem \ref{main} to higher order derivatives will be delayed to a separate section at the end of the paper.

We have arranged the paper in the following way.\ In Section 2, we recall some basic facts about Jacobi polynomials, some specific properties of inner product spaces where these polynomials form orthogonal systems and connections among Fourier-Jacobi coefficients coming from expansions of functions on such spaces.\ In Sections 3 and 4, we discuss upon the differentiability of Fourier-Jacobi expansions and deduce formulas to compute the derivatives using a three-term relation for derivatives of Jacobi polynomials.\ The contents in these two sections are similar, the major difference being the convergence assumptions on the series of Fourier-Jacobi coefficients adopted in each case.\ In Section 5, we prove several separated theorems that, together, ratify all the assertions in Theorem \ref{main}.


\section{Jacobi polynomials and Fourier-Jacobi expansions}

For  $\alpha,\beta>-1$, the set $\{ P_m^{\alpha,\beta}: m\in \mathbb{Z}_+\}$ of Jacobi polynomials associated to the pair $(\alpha,\beta)$ is orthogonal on $[-1,1]$ in the sense that
$$
\int_{-1}^1 P_m^{\alpha,\beta}(t)P_n^{\alpha,\beta}(t)(1-t)^{\alpha}(1+t)^{\beta}dt = \delta_{m,n} h_m^{\alpha,\beta}
$$
where
$$
h_m^{\alpha,\beta} = \frac{2^{\alpha+\beta+1}}{2m+\alpha+\beta+1}\frac{\Gamma(m+\alpha+1)\Gamma(m+\beta+1)}{\Gamma(m+1)\Gamma(m+\alpha+\beta+1)},\quad m\in \mathbb{Z}_+.
$$
Here, and in many other places in the paper, $\Gamma$ stands for the usual gamma function.\ Let us write $L_1^{\alpha,\beta}[-1,1]$ to denote the set of all measurable functions $f$ in $[-1,1]$ which are integrable with respect to the weight $(1-t)^\alpha (1+t)^\beta$, that is, for which
$$
\|f\|_1^{\alpha,\beta}:=\int_{-1}^1|f(t)|(1-t)^\alpha (1+t)^\beta dt < \infty.
$$
Every $f$ in $L_1^{\alpha,\beta}[-1,1]$ has a formal Fourier-Jacobi series
$$
f \sim \sum_{n=0}^\infty  a_n^{\alpha,\beta} R_n^{\alpha,\beta}
$$
where $R_n^{\alpha,\beta}:=P_n^{\alpha,\beta}/P_n^{\alpha,\beta}(1)$ and
$$
a_n^{\alpha,\beta}=a_n^{\alpha,\beta}(f):=\frac{[P_n^{\alpha,\beta}(1)]^2}{h_n^{\alpha,\beta}}\int_{-1}^1 f(t) R_n^{(\alpha,\beta)}(t)(1-t)^\alpha(1+t)^\beta dt.
$$
We observe that
$$
P_n^{\alpha,\beta}(1)=\left( \begin{array}{c} n+\alpha\\ n \end{array}\right):= \frac{\Gamma(\alpha+n+1)}{n!\Gamma(\alpha+1)}, \quad n \in \mathbb{Z}_+,\quad \alpha,\beta >-1.
$$
Returning to Gangolli's result, it is promptly seen that a continuous and zonal kernel $K$ on $\mathbb{M}^d$ is positive definite if and only if its radial part $K_r^d$ has a convergent Fourier-Jacobi series with respect to the normalized Jacobi polynomials $\{R_n^{(q-2)/2,\beta}: n\in \mathbb{Z}_+\}$, in which all
coefficients $a_{k}^{(d-2)/2,\beta}$ are nonnegative and $\sum_{n=0}^\infty  a_n^{(d-2)/2,\beta}<\infty$, with $\beta$ agreeing with the five categories of spaces we have mentioned before.\

Since the radial part of a continuous, positive definite and zonal kernel on $\mathbb{M}^d$ is given by convergent series of Jacobi polynomials, in this section we will deduce some specific properties involving the spaces $L_1^{\alpha,\beta}[-1,1]$ which are pertinent to this work.\ We advise the reader that a general treatment on the Jacobi polynomials can be found in \cite{szego}.

The following elementary property will be used without further mention in Lemma \ref{coef1}, Theorem \ref{mainjacobi} and Theorem \ref{mainjacobi2} ahead.

\begin{lem}\label{inc}
The inclusions $L^{\alpha,\beta}_1[-1,1] \subset L^{\alpha+1,\beta}_1[-1,1]$ and $L^{\alpha,\beta}_1[-1,1] \subset L^{\alpha,\beta+1}_1[-1,1]$ hold for $\alpha, \beta >-1$.

\end{lem}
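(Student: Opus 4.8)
The plan is to reduce the inclusion $L^{\alpha,\beta}_1[-1,1]\subset L^{\alpha+1,\beta}_1[-1,1]$ to the elementary pointwise estimate on the weights. First I would observe that for every $t\in[-1,1]$ one has $1-t\le 2$, hence
$$
(1-t)^{\alpha+1}(1+t)^{\beta}=(1-t)\,(1-t)^{\alpha}(1+t)^{\beta}\le 2\,(1-t)^{\alpha}(1+t)^{\beta}.
$$
Since both sides are nonnegative on $[-1,1]$, multiplying by $|f(t)|$ and integrating preserves the inequality, so that for any measurable $f$,
$$
\|f\|_1^{\alpha+1,\beta}=\int_{-1}^{1}|f(t)|(1-t)^{\alpha+1}(1+t)^{\beta}\,dt\le 2\int_{-1}^{1}|f(t)|(1-t)^{\alpha}(1+t)^{\beta}\,dt=2\,\|f\|_1^{\alpha,\beta}.
$$
Consequently, if $f\in L^{\alpha,\beta}_1[-1,1]$, i.e.\ $\|f\|_1^{\alpha,\beta}<\infty$, then $\|f\|_1^{\alpha+1,\beta}<\infty$, which is exactly the statement $f\in L^{\alpha+1,\beta}_1[-1,1]$. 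This proves the first inclusion.

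For the second inclusion $L^{\alpha,\beta}_1[-1,1]\subset L^{\alpha,\beta+1}_1[-1,1]$ I would run the symmetric argument using $1+t\le 2$ on $[-1,1]$, obtaining $(1-t)^{\alpha}(1+t)^{\beta+1}\le 2\,(1-t)^{\alpha}(1+t)^{\beta}$ pointwise, hence $\|f\|_1^{\alpha,\beta+1}\le 2\,\|f\|_1^{\alpha,\beta}$ for all measurable $f$, and the inclusion follows in the same way. (One may, of course, instead note that the map $t\mapsto -t$ interchanges the roles of the two endpoints and the two parameters, reducing the second inclusion to the first.)

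There is essentially no obstacle here: the only point worth a moment's care is that the bounding constant $2$ comes from the fact that the diameter of $[-1,1]$ is $2$, so each extra factor $(1-t)$ or $(1+t)$ is bounded by $2$; the parameters $\alpha,\beta>-1$ play no role beyond guaranteeing that the original weight is locally integrable so that "$\in L^{\alpha,\beta}_1$" is a meaningful hypothesis. The inclusions are therefore immediate, which is why the lemma is labelled elementary and will be used without further mention.
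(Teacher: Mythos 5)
Your proof is correct and rests on the same elementary observation as the paper's, namely that the extra factor $(1-t)$ (resp.\ $(1+t)$) is bounded on $[-1,1]$; the paper merely phrases this as splitting $\int |f|(1-t)^{\alpha+1}(1+t)^{\beta}\,dt$ into the difference of two finite integrals rather than invoking the single bound $1-t\le 2$. No issues.
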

\begin{proof} We justify the first one, the other one being similar.\ It suffices to observe that
\begin{eqnarray*}
\int_{-1}^1|f(t)|(1-t)^{\alpha+1} (1+t)^\beta dt & = & \int_{-1}^1|f(t)|(1-t)^{\alpha} (1+t)^\beta dt\\
&  & \hspace*{10mm}-\int_{-1}^1|f(t)|t(1-t)^{\alpha} (1+t)^\beta dt,
\end{eqnarray*}
as long as all the integrals exist.\ If $f\in L^{\alpha,\beta}_1[-1,1]$, it is easily seen that both integrals in the right hand side of the equality are finite.\ Hence, $f\in L^{\alpha+1,\beta}_1[-1,1]$.
\end{proof}

The following three-term recurrence formula for Jacobi polynomials is known and plays an important role in the arguments ahead.\ It follows from \cite[p.72]{szego} after the incorporation of the normalization we have adopted for the Jacobi polynomials.

\begin{prop}\label{ch} The normalized Jacobi polynomials $R_n^{\alpha,\beta}$ satisfy
$$
(1-t^2)\frac{d}{dt} R_n^{\alpha,\beta} = A_n^{\alpha,\beta} R_{n-1}^{\alpha,\beta} + B_n^{\alpha,\beta}R_n^{\alpha,\beta} + C_n^{\alpha,\beta} R_{n+1}^{\alpha,\beta},\quad n\geq 1,
$$
where
$$
A_n^{\alpha,\beta}  =  \frac{2n(n+\beta)(n+\alpha+\beta+1)}{(2n+\alpha+\beta)(2n+\alpha+\beta+1)},
$$
$$
B_n^{\alpha,\beta}  = (\alpha-\beta) \frac{2n(n+\alpha+\beta+1)}{(2n+\alpha+\beta)(2n+\alpha+\beta+2)},
$$
and
$$
C_n^{\alpha,\beta}  =  - \frac{2n(n+\alpha+1)(n+\alpha+\beta+1)}{(2n+\alpha+\beta+1)(2n+\alpha+\beta+2)}.
$$
\end{prop}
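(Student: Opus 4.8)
The plan is to reduce the claimed identity to two classical facts about the unnormalized Jacobi polynomials $P_n^{\alpha,\beta}$ and then absorb the normalization $R_n^{\alpha,\beta}=P_n^{\alpha,\beta}/P_n^{\alpha,\beta}(1)$. First I would invoke the standard two-term differentiation formula on \cite[p.72]{szego}, namely
$$
(1-t^2)\frac{d}{dt}P_n^{\alpha,\beta}(t) = n\left[\frac{\alpha-\beta}{2n+\alpha+\beta}-t\right]P_n^{\alpha,\beta}(t) + \frac{2(n+\alpha)(n+\beta)}{2n+\alpha+\beta}P_{n-1}^{\alpha,\beta}(t),\quad n\geq 1.
$$
The only obstruction to reading off a three-term relation directly from this is the isolated factor $tP_n^{\alpha,\beta}$; everything else is already a multiple of $P_n^{\alpha,\beta}$ or $P_{n-1}^{\alpha,\beta}$.

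To remove that factor I would use the classical three-term recurrence $tP_n^{\alpha,\beta}=p_nP_{n+1}^{\alpha,\beta}+q_nP_n^{\alpha,\beta}+r_nP_{n-1}^{\alpha,\beta}$, with
$$
p_n=\frac{2(n+1)(n+\alpha+\beta+1)}{(2n+\alpha+\beta+1)(2n+\alpha+\beta+2)},\quad q_n=\frac{\beta^2-\alpha^2}{(2n+\alpha+\beta)(2n+\alpha+\beta+2)},\quad r_n=\frac{2(n+\alpha)(n+\beta)}{(2n+\alpha+\beta)(2n+\alpha+\beta+1)}.
$$
Substituting this into the differentiation formula expresses $(1-t^2)(P_n^{\alpha,\beta})'$ as a single combination $\widetilde A_nP_{n-1}^{\alpha,\beta}+\widetilde B_nP_n^{\alpha,\beta}+\widetilde C_nP_{n+1}^{\alpha,\beta}$, where $\widetilde C_n=-np_n$, $\widetilde B_n=-nq_n+n(\alpha-\beta)/(2n+\alpha+\beta)$ and $\widetilde A_n=-nr_n+2(n+\alpha)(n+\beta)/(2n+\alpha+\beta)$.

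Finally I would pass to the normalized polynomials. Dividing the identity by $P_n^{\alpha,\beta}(1)$ turns the left-hand side into $(1-t^2)(R_n^{\alpha,\beta})'$, while on the right each $P_{n\pm1}^{\alpha,\beta}$ must be rewritten as $P_{n\pm1}^{\alpha,\beta}(1)R_{n\pm1}^{\alpha,\beta}$. Using the evaluation $P_n^{\alpha,\beta}(1)=\Gamma(\alpha+n+1)/(n!\,\Gamma(\alpha+1))$ recorded in Section 2, the two relevant ratios collapse to
$$
\frac{P_{n-1}^{\alpha,\beta}(1)}{P_n^{\alpha,\beta}(1)}=\frac{n}{n+\alpha},\qquad \frac{P_{n+1}^{\alpha,\beta}(1)}{P_n^{\alpha,\beta}(1)}=\frac{n+\alpha+1}{n+1}.
$$
Hence the coefficients in the statement are $A_n^{\alpha,\beta}=\widetilde A_n\,n/(n+\alpha)$, $B_n^{\alpha,\beta}=\widetilde B_n$ and $C_n^{\alpha,\beta}=\widetilde C_n\,(n+\alpha+1)/(n+1)$.

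The remaining work is purely computational: inserting the explicit $p_n,q_n,r_n$ and simplifying. I expect $A_n^{\alpha,\beta}$ and $C_n^{\alpha,\beta}$ to fall out almost immediately, since the factors $(n+\alpha)$ and $(n+1)$ introduced by the normalization cancel cleanly against those already present in $r_n$ and $p_n$. The one step that needs care—and the only place the algebra is not transparent—is $B_n^{\alpha,\beta}$: after writing $\beta^2-\alpha^2=-(\alpha-\beta)(\alpha+\beta)$ one combines $n(\alpha-\beta)(\alpha+\beta)/[(2n+\alpha+\beta)(2n+\alpha+\beta+2)]$ with $n(\alpha-\beta)/(2n+\alpha+\beta)$ over the common denominator, whereupon the numerator collapses to $2n(\alpha-\beta)(n+\alpha+\beta+1)$ and yields the claimed $B_n^{\alpha,\beta}$. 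This identification completes the proof.
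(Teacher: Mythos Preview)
Your proposal is correct and is precisely the approach the paper indicates: the paper's entire proof is the one-line remark that the identity ``follows from \cite[p.~72]{szego} after the incorporation of the normalization we have adopted,'' and you have simply spelled out those two steps (Szeg\H{o}'s formula (4.5.7) together with the three-term recurrence, then the passage from $P_n^{\alpha,\beta}$ to $R_n^{\alpha,\beta}$) in full detail. The computations you sketch for $A_n^{\alpha,\beta}$, $B_n^{\alpha,\beta}$, $C_n^{\alpha,\beta}$ all check out.
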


The next lemma describes a relation among Fourier-Jacobi coefficients of a function $f$ in $L_1^{\alpha,\beta}[-1,1]$
and Fourier-Jacobi coefficients of the same function $f$ as an element of either $L_1^{\alpha+1,\beta}[-1,1]$ or $L_1^{\alpha,\beta+1}[-1,1]$.

\begin{lem}\label{coef1} If $f$ belongs to $L_1^{(\alpha,\beta)}[-1,1]$ and $n\in \mathbb{Z}_+$, then
$$
(\alpha+1)a_n^{\alpha+1,\beta}= \frac{(n+\alpha+1)(n+\alpha+\beta+1)}{2n+\alpha+\beta+1}a_{n}^{\alpha,\beta} - \frac{(n+1)(n+\beta+1)}{2n+\alpha+\beta+3} a_{n+1}^{\alpha,\beta},
$$
and
$$
a_n^{\alpha,\beta+1}= \frac{n+\alpha+\beta+1}{2n+\alpha+\beta+1} a_{n}^{\alpha,\beta} + \frac{n+1}{2n+\alpha+\beta+3}a_{n+1}^{\alpha,\beta}.
$$
\end{lem}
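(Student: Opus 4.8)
The plan is to deduce both identities from two classical three-term \emph{connection} formulas for Jacobi polynomials, namely the expressions of $(1-t)P_n^{\alpha+1,\beta}(t)$ and of $(1+t)P_n^{\alpha,\beta+1}(t)$ as linear combinations of $P_n^{\alpha,\beta}(t)$ and $P_{n+1}^{\alpha,\beta}(t)$ (these are standard; see \cite{szego}). First I would pass to the normalized polynomials, producing constants $p_n,q_n,r_n,s_n$, each a quotient of linear factors in $n$, such that
$$
(1-t)R_n^{\alpha+1,\beta}(t) = p_n\,R_n^{\alpha,\beta}(t) + q_n\,R_{n+1}^{\alpha,\beta}(t), \qquad
(1+t)R_n^{\alpha,\beta+1}(t) = r_n\,R_n^{\alpha,\beta}(t) + s_n\,R_{n+1}^{\alpha,\beta}(t),
$$
for all $n\in\mathbb{Z}_+$; the constants come from the unnormalized formulas after dividing by $P_n^{\alpha+1,\beta}(1)$ (resp. $P_n^{\alpha,\beta+1}(1)$) and using the explicit value $P_n^{\alpha,\beta}(1)=\Gamma(\alpha+n+1)/(n!\,\Gamma(\alpha+1))$ recorded above.

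Next, since $f\in L_1^{\alpha,\beta}[-1,1]$, Lemma \ref{inc} guarantees $f\in L_1^{\alpha+1,\beta}[-1,1]\cap L_1^{\alpha,\beta+1}[-1,1]$, so $a_n^{\alpha+1,\beta}(f)$ and $a_n^{\alpha,\beta+1}(f)$ are well defined. Starting from the definition of the coefficients,
$$
a_n^{\alpha+1,\beta}(f) = \frac{[P_n^{\alpha+1,\beta}(1)]^2}{h_n^{\alpha+1,\beta}}\int_{-1}^1 f(t)\,\big[(1-t)R_n^{\alpha+1,\beta}(t)\big]\,(1-t)^{\alpha}(1+t)^{\beta}\,dt,
$$
I would insert the first connection formula and recognize the two resulting integrals as $\dfrac{h_k^{\alpha,\beta}}{[P_k^{\alpha,\beta}(1)]^2}\,a_k^{\alpha,\beta}(f)$ for $k=n$ and $k=n+1$. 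This expresses $a_n^{\alpha+1,\beta}(f)$ as an explicit combination of $a_n^{\alpha,\beta}(f)$ and $a_{n+1}^{\alpha,\beta}(f)$; running the same argument with $(1+t)$ in place of $(1-t)$ and the second connection formula gives $a_n^{\alpha,\beta+1}(f)$. No interchange of an infinite series with the integral is needed, since the connection formulas are finite.

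What remains is purely computational: each coefficient appearing in the two expansions is a product of $[P_n^{\alpha+1,\beta}(1)]^2/h_n^{\alpha+1,\beta}$ (or its $\beta+1$ counterpart), one of $p_n,q_n,r_n,s_n$, and $h_k^{\alpha,\beta}/[P_k^{\alpha,\beta}(1)]^2$ for $k\in\{n,n+1\}$, and every factor is a ratio of gamma values at arguments differing by integers. Collapsing these ratios to the compact rational forms displayed in the statement — the surviving factor $\alpha+1$ in the first identity being carried to the left-hand side — is routine but is where all the care is needed, so this bookkeeping is essentially the only obstacle. Finally I would note that, unlike the relation in Proposition \ref{ch}, both connection formulas already hold at $n=0$, so the resulting identities are valid for every $n\in\mathbb{Z}_+$.
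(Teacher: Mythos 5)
Your proposal is correct and follows essentially the same route as the paper: both rest on Szeg\H{o}'s connection formulas for $(1-t)P_n^{\alpha+1,\beta}$ and $(1+t)P_n^{\alpha,\beta+1}$ in terms of $P_n^{\alpha,\beta}$ and $P_{n+1}^{\alpha,\beta}$, followed by multiplying by $f$ against the weight $(1-t)^\alpha(1+t)^\beta$, recognizing the resulting integrals as Fourier--Jacobi coefficients, and simplifying the ratios of $h_n$'s and $P_n(1)$'s. The only cosmetic difference is that you normalize the polynomials before integrating rather than after, which changes nothing of substance.
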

\begin{proof} The proof of the first equality begins with Formula (4.5.4) in \cite{szego}:
$$
(1-x)P_n^{\alpha+1,\beta} = \frac{2(n+\alpha+1)}{2n+\alpha+\beta+2}P_n^{\alpha,\beta} - \frac{2(n+1)}{2n+\alpha+\beta+2}P_{n+1}^{\alpha,\beta}, \quad x\in (-1,1).
$$
Multiplying it by $f(t)(1-t)^\alpha(1+t)^\beta$, integrating and using the orthogonality relation for Jacobi polynomials, we obtain
$$
\frac{h_n^{\alpha+1,\beta}}{P_{n}^{\alpha+1,\beta}(1)}a_n^{\alpha+1,\beta} = \frac{2(n+\alpha+1)}{2n+\alpha+\beta+2}
\frac{h_n^{\alpha,\beta}}{P_{n}^{\alpha,\beta}(1)}a_n^{\alpha,\beta} - \frac{2(n+1)}{2n+\alpha+\beta+2}\frac{h_{n+1}^{\alpha,\beta}}{P_{n+1}^{\alpha,\beta}(1)}a_{n+1}^{\alpha,\beta}.
$$
However, it is easily seen that
$$
\frac{2(n+\alpha+1)}{2n+\alpha+\beta+2}\frac{h_n^{\alpha,\beta}}{h_n^{\alpha+1,\beta}}\frac{P_{n}^{\alpha+1,\beta}(1)}{P_{n}^{\alpha,\beta}(1)} =
\frac{(n+\alpha+1)(n+\alpha+\beta+1)}{(\alpha+1)(2n+\alpha+\beta+1)},\quad n \in \mathbb{Z}_+
$$
and
$$
\frac{2(n+1)}{2n+\alpha+\beta+2}\frac{h_{n+1}^{\alpha,\beta}}{h_n^{\alpha+1,\beta}}\frac{P_{n}^{\alpha+1,\beta}(1)}{P_{n+1}^{\alpha,\beta}(1)} =
\frac{(n+1)(n+\beta+1)}{(\alpha+1)(2n+\alpha+\beta+3)}\quad n \in \mathbb{Z}_+.
$$
Returning these two formulas into what we had before leads to the first equality in the statement of the lemma.\ The proof of the second one is analogous, but needs the recurrence relation
$$
(1+x)P_n^{\alpha,\beta+1}=\frac{2(n+1)}{2n+\alpha+\beta+2}P_{n+1}^{\alpha,\beta}+\frac{2(n+\beta+1)}{2n+\alpha+\beta+2}P_n^{\alpha,\beta},\quad x \in (-1,1).
$$
 The proof is complete.
\end{proof}


\section{Differentiating Jacobi-Fourier expansions}

In this section, we discuss the differentiability of functions that belong to $L_1^{\alpha,\beta}[-1,1]$.\ The first result in this section is purely technical.\ It provides convergence of a numerical sequence of real numbers under a certain control of the sequence.\ It first appeared in \cite{meneg} as a generalization of another one proved in \cite{ziegel}.

\begin{lem}\label{convseq} Let $\{b_n\}$ be a sequence of nonnegative real numbers satisfying
$$
b_n \geq \lambda_n b_{n+1} - \xi_n, \quad n\in \mathbb{Z}_+,
$$
in which $\{\lambda_n\}$ is an increasing sequence of positive real numbers converging to 1 and $\{\xi_n\}$ is a sequence of nonnegative numbers for which $\sum_{n=0}^\infty n\xi_n<\infty$.\ If $\sum_{n=0}^\infty b_n<\infty$ and $\{\lambda_n^n\}$ has a positive lower bound, then $\{nb_n\}$ converges to 0.
\end{lem}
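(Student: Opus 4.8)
\medskip\noindent\textbf{Proof plan.} The plan is to turn the one-step recurrence $b_n \ge \lambda_n b_{n+1} - \xi_n$ into a many-step comparison and then average that comparison over a dyadic block of indices. First I would rewrite the hypothesis as $b_{n+1} \le \lambda_n^{-1}(b_n + \xi_n)$ and iterate it upward; a routine induction on $m$ gives, for all integers $n \le m$,
$$
b_m \le \frac{b_n}{\prod_{i=n}^{m-1}\lambda_i} \;+\; \sum_{j=n}^{m-1}\frac{\xi_j}{\prod_{i=j}^{m-1}\lambda_i}.
$$

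The decisive point --- and the only place the hypothesis on $\{\lambda_n^n\}$ is used --- is that $\prod_{i=k}^{m-1}\lambda_i$ is bounded below by a fixed positive constant whenever $k \le m \le 2k$. Indeed, fix $c \in (0,1]$ with $\lambda_n^n \ge c$ for all $n$; since $\{\lambda_n\}$ is increasing and bounded above by $1$, for $k \le m \le 2k$ one has $\prod_{i=k}^{m-1}\lambda_i \ge \lambda_k^{\,m-k} \ge \lambda_k^{\,k} \ge c$, using that $m-k \le k$ and that the base lies in $(0,1]$. Substituting this into the iterated inequality --- legitimate for every factor occurring there precisely when $n \le m \le 2n$, since then each index $j$ in the sum also satisfies $j \le m \le 2j$ --- and writing $\rho_n := \sum_{j\ge n}\xi_j$, I obtain the clean estimate
$$
c\, b_m \le b_n + \rho_n, \qquad n \le m \le 2n.
$$

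Finally I would fix a large $m$, apply the last inequality for each integer $n$ with $\lceil m/2\rceil \le n \le m$ (all of which satisfy $n \le m \le 2n$), and sum over these $n$; since there are more than $m/2$ of them, this yields
$$
\frac{c\,m}{2}\, b_m \le \sum_{n \ge \lceil m/2\rceil} b_n \;+\; \sum_{n \ge \lceil m/2\rceil}\rho_n.
$$
As $m\to\infty$ the first sum is a tail of the convergent series $\sum_n b_n$, hence tends to $0$. For the second, interchanging the order of summation gives $\sum_{n\ge N}\rho_n = \sum_{j\ge N}(j-N+1)\xi_j \le \sum_{j\ge N}(j+1)\xi_j$, a tail of $\sum_j \xi_j + \sum_j j\xi_j$; this series is finite exactly because $\sum_n n\xi_n < \infty$ (whence also $\sum_n \xi_n < \infty$), so the second sum tends to $0$ as well. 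Dividing through by $c/2$ then gives $n b_n \to 0$.

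I do not expect a genuine obstacle here: the one thing that needs care is the index bookkeeping, so that the dyadic bound $\prod_i \lambda_i \ge c$ is truly available for every factor in the iterated inequality, together with the observation that the auxiliary double series $\sum_n \rho_n$ must be tamed through $\sum_n n\xi_n<\infty$ rather than merely through $\sum_n \xi_n<\infty$; everything else reduces to the elementary fact that a convergent series of nonnegative terms has vanishing tails.
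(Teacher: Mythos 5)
Your proof is correct: the iteration of $b_{n+1}\le\lambda_n^{-1}(b_n+\xi_n)$, the lower bound $\prod_{i=k}^{m-1}\lambda_i\ge\lambda_k^{\,k}\ge c$ valid on dyadic ranges $k\le m\le 2k$, and the averaging of $c\,b_m\le b_n+\rho_n$ over the roughly $m/2$ indices $n\in[\lceil m/2\rceil,m]$ all check out, including the bookkeeping that each inner index $j$ also satisfies $j\le m\le 2j$ and the interchange $\sum_{n\ge N}\rho_n=\sum_{j\ge N}(j-N+1)\xi_j$, which is where $\sum_n n\xi_n<\infty$ is genuinely needed. Note that the paper itself gives no proof of this lemma --- it is quoted from Menegatto's paper as a generalization of Ziegel's --- but your argument is a correct, self-contained proof in the same telescoping-plus-dyadic-block spirit as those sources.
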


The second result is equally technical.\ It provides the increasingness of a particular sequence that will appear in the proof of the next theorem.

\begin{lem} \label{lambdan} For $\alpha,\beta >-1$, define
$$
\lambda_n:= \frac{n(n+\beta)(2n+\alpha+\beta+2)}{(n+\alpha+1)(n+\alpha+\beta+1)(2n+\alpha+\beta)}, \quad n=1,2,\ldots.
$$
If $2\alpha \geq \alpha+\beta \geq -1$, then $\{\lambda_n\}$ is an increasing sequence converging to 1.\ In addition, the sequence
$\{\lambda_n^n \}$ has a positive lower bound.
\end{lem}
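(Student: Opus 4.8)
The plan is to establish the three assertions of the lemma one at a time, with the monotonicity being the only substantial one. To begin with, $\lambda_n$ is well defined and strictly positive for every $n\geq 1$: under the standing hypotheses $\alpha,\beta>-1$ and $\alpha+\beta\geq-1$ each of the six linear factors $n$, $n+\beta$, $2n+\alpha+\beta+2$, $n+\alpha+1$, $n+\alpha+\beta+1$, $2n+\alpha+\beta$ is positive when $n\geq1$. Factoring
$$
\lambda_n=\frac{n}{n+\alpha+1}\cdot\frac{n+\beta}{n+\alpha+\beta+1}\cdot\frac{2n+\alpha+\beta+2}{2n+\alpha+\beta},
$$
each of the three quotients tends to $1$, so $\lambda_n\to1$. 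Note that this factorization does not settle monotonicity by itself, since the first two quotients increase in $n$ while the third decreases.

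For the monotonicity I would pass to a continuous variable: replace $n$ by $x\in[1,\infty)$, keeping all six factors positive, and show $\frac{d}{dx}\log\lambda(x)\geq0$ for the resulting function $\lambda(x)$. Differentiating $\log\lambda$, grouping the six summands in pairs and using the identity $(\alpha+\beta+1)-\beta=\alpha+1$, one obtains
$$
\frac{d}{dx}\log\lambda(x)=\frac{\alpha+1}{x(x+\alpha+1)}+\frac{\alpha+1}{(x+\beta)(x+\alpha+\beta+1)}-\frac{4}{(2x+\alpha+\beta)(2x+\alpha+\beta+2)}.
$$
Clearing the (positive) common denominator, the inequality $\frac{d}{dx}\log\lambda(x)\geq0$ becomes $(\alpha+1)\,w\,(u+v)\geq 4uv$, where $u=x(x+\alpha+1)$, $v=(x+\beta)(x+\alpha+\beta+1)$, $w=(2x+\alpha+\beta)(2x+\alpha+\beta+2)$. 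The decisive step is the change of variables $c=\alpha+1$, $q=\beta^2/4$, $m=x+\beta/2$, $r=m(m+c)$, which symmetrizes the first two terms: a short computation gives $u+v=2(r+q)$, $uv=(r-q)^2-qc^2$, $w=4r+c^2-1$, so the inequality collapses to
$$
2(2\alpha+1)\,r^2+\bigl[c(c^2-1)+4q(c+1)\bigr]\,r+q\bigl[c(c^2-1)+2c^2-2q\bigr]\geq0.
$$

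It is here that the hypothesis $2\alpha\geq\alpha+\beta\geq-1$ enters. It gives $\alpha\geq-1/2$, so the leading coefficient $2(2\alpha+1)$ is nonnegative, and it gives $-(\alpha+1)\leq\beta\leq\alpha+1$, i.e.\ $4q\leq c^2$; together with $c\geq1/2$ this makes the constant term nonnegative. The middle coefficient is nonnegative as well, except when $\alpha\in(-1/2,0)$ and $q$ lies in a certain sub-interval; there I would check that the discriminant of the quadratic in $r$ is $\leq0$ --- as a function of $q$ it is convex, it vanishes at $q=\alpha^2/4$ (the case $\beta=\alpha$), and it is negative at the value of $q$ where the middle coefficient is $0$, hence it is $\leq0$ on the whole relevant sub-interval. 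In every case the quadratic is $\geq0$ for all real $r$, so $\lambda(x)$ is nondecreasing on $[1,\infty)$; thus $\{\lambda_n\}$ is nondecreasing, and together with $\lambda_n\to1$ this forces $0<\lambda_1\leq\lambda_n\leq1$. For the last claim I would use $\log t\geq1-1/t$ and $\lambda_n\geq\lambda_1$ to get $n\log\lambda_n\geq-n(1-\lambda_n)/\lambda_1$; since the explicit rational form of $\lambda_n$ shows $n(1-\lambda_n)\to2\alpha+1$, the sequence $\{n(1-\lambda_n)\}$ is bounded by some $M$, and therefore $\lambda_n^n\geq e^{-M/\lambda_1}>0$, which is the desired positive lower bound.

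The main obstacle is the sign verification of the quadratic in $r$: it is elementary, but it requires a case split on the sign of $\alpha$ and a discriminant estimate, and it is precisely here --- through $2\alpha\geq\alpha+\beta$, equivalently $\alpha\geq\beta$ --- that the hypothesis on $(\alpha,\beta)$ is used in an essential way; without it the middle and leading coefficients can make the quadratic negative on the relevant range of $r$.
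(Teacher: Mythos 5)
Your proof is correct, and it follows the paper's overall strategy (pass to the continuous variable $x$, take the logarithmic derivative, show it is nonnegative on $[1,\infty)$, then get the lower bound on $\lambda_n^n$ from $n(1-\lambda_n)\to 2\alpha+1$); but at the decisive step you pair the six partial fractions differently, and this makes your argument far more laborious than it needs to be. You group them as
$$
\frac{\alpha+1}{x(x+\alpha+1)}+\frac{\alpha+1}{(x+\beta)(x+\alpha+\beta+1)}-\frac{4}{(2x+\alpha+\beta)(2x+\alpha+\beta+2)},
$$
a two-positive-minus-one-negative decomposition that forces you into the change of variables $r=m(m+c)$, a quadratic in $r$, and a discriminant analysis with a case split on the sign of $\alpha$. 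The paper instead pairs $\frac1x$ with $-\frac{1}{x+\alpha+\beta+1}$, $\frac{1}{x+\beta}$ with $-\frac{2}{2x+\alpha+\beta}$, and $\frac{2}{2x+\alpha+\beta+2}$ with $-\frac{1}{x+\alpha+1}$, obtaining
$$
\frac{f'(x)}{f(x)}=\frac{\alpha+\beta+1}{x(x+\alpha+\beta+1)}+\frac{\alpha-\beta}{(x+\beta)(2x+\alpha+\beta)}+\frac{\alpha-\beta}{(2x+\alpha+\beta+2)(x+\alpha+1)},
$$
where each term is manifestly nonnegative precisely under the hypotheses $\alpha+\beta\geq-1$ and $\alpha\geq\beta$, and the proof ends there. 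Your computations check out --- in particular the identities $u+v=2(r+q)$, $uv=(r-q)^2-qc^2$, $w=4r+c^2-1$ are right, the discriminant does vanish at $q=\alpha^2/4$ and is $\leq0$ at the zero of the middle coefficient, and the degenerate case $\alpha=-1/2$ with negative middle coefficient cannot occur --- so the argument stands; just note that your quadratic in $r$ need only be nonnegative for $r>0$ (which is all you use when all three coefficients are nonnegative), not for all real $r$ as stated. What your route buys is essentially nothing here beyond robustness to the choice of grouping; what the paper's route buys is a three-line verification in which the role of each hypothesis is transparent.
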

\begin{proof} The function $f:[1,\infty) \to \mathbb{R}$ given by
$$
f(x)=\frac{x(x+\beta)(2x+\alpha+\beta+2)}{(x+\alpha+1)(x+\alpha+\beta+1)(2x+\alpha+\beta)}, \quad x\geq 1,
$$
is positive under the condition $\alpha \geq \beta \geq -1-\alpha$.\ Computing $\ln f(x)$ and using logarithmic differentiation, we reach the following equality:
\begin{eqnarray*}
\frac{f'(x)}{f(x)} & = & \left[\frac{1}{x}-\frac{1}{x+\alpha+\beta+1}+\frac{1}{x+\beta}-\frac{2}{2x+\alpha+\beta}\right.\\
& & \hspace{4cm}\left.+\frac{2}{2x+\alpha+\beta+2}-\frac{1}{x+\alpha+1}\right].
\end{eqnarray*}
A convenient enhancement leads to
\begin{eqnarray*}
\frac{f'(x)}{f(x)} & = & \left[\frac{\alpha+\beta+1}{x(x+\alpha+\beta+1)}+\frac{\alpha-\beta}{(x+\beta)(2x+\alpha+\beta)}\right.\\
& & \hspace{4cm} \left. +\frac{\alpha-\beta}{(2x+\alpha+\beta+2)(x+\alpha+1)}\right].
\end{eqnarray*}
Since all the fractions on the right hand side of the above equation are nonnegative in $[1,\infty)$ when $\alpha \geq \beta \geq -1-\alpha$, it follows that $f'(x)\geq 0$ in $[1,\infty)$.\ The second assertion follows from the fact that $\{\lambda_n^n\}$ is a sequence of positive terms and the obvious equality
$\lim_{n\to \infty}\lambda_n^n =e^{-2\alpha-1}$.
\end{proof}

In the proof of the theorem below, we use the standard decomposition of a real number $r$ through its positive and negative parts: $r=r_+-r_-$.

\begin{thm}\label{mainjacobi} Let $f$ be a function in $L_1^{\alpha,\beta}[-1,1]$.\ If $2\alpha \geq \alpha+\beta \geq -1$, all the Fourier-Jacobi coefficients $a_k^{\alpha,\beta}$ are nonnegative
and both series $\sum_{k=0}^{\infty}a_k^{\alpha,\beta}$ and $\sum_{k=0}^{\infty}|a_k^{\alpha+1,\beta}|$ converge, then $f$ is differentiable in $(-1,1)$ and
$$
(1-t^2)f'(t)=f_1(t)-f_2(t), \quad t \in (-1,1),
$$
in which $f_1$ and $f_2$ are continuous functions of $L_1^{\alpha,\beta}[-1,1]$.\ The Fourier-Jacobi
coefficients of the functions $f_1$ and $f_2$ are nonnegative.
\end{thm}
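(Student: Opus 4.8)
The plan is to first replace $f$ by a continuous representative, then to extract the decay estimate $\lim_{n\to\infty}n\,a_n^{\alpha,\beta}=0$, and finally to differentiate the Fourier--Jacobi series of $f$ term by term via Proposition~\ref{ch}, splitting the resulting expansion into its positive and negative parts. The first step is routine: since $2\alpha\geq\alpha+\beta\geq-1$ forces $\alpha\geq-1/2$ and $\alpha\geq\beta$, the classical bound for Jacobi polynomials gives $|R_n^{\alpha,\beta}(t)|\leq1$ for all $n$ and all $t\in[-1,1]$; combined with $a_n^{\alpha,\beta}\geq0$ and $\sum_n a_n^{\alpha,\beta}<\infty$ this makes $\sum_n a_n^{\alpha,\beta}R_n^{\alpha,\beta}$ converge uniformly on $[-1,1]$ to a continuous function with the same Fourier--Jacobi coefficients as $f$. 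By completeness of the Jacobi system in $L_1^{\alpha,\beta}[-1,1]$ that function equals $f$ almost everywhere, so I henceforth take $f=\sum_n a_n^{\alpha,\beta}R_n^{\alpha,\beta}$, continuous on $[-1,1]$.

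For the decay estimate I would rearrange the first identity of Lemma~\ref{coef1} to isolate $a_n^{\alpha,\beta}$ and use $a_{n+1}^{\alpha,\beta}\geq0$ together with the trivial bound $(\alpha+1)a_n^{\alpha+1,\beta}\geq-(\alpha+1)|a_n^{\alpha+1,\beta}|$; this produces an inequality
\[
a_n^{\alpha,\beta}\ \geq\ \lambda_n\,a_{n+1}^{\alpha,\beta}-\xi_n,\qquad n\geq1,
\]
in which $\lambda_n$ can be taken to be the sequence of Lemma~\ref{lambdan} (one verifies that the genuine coefficient of $a_{n+1}^{\alpha,\beta}$ dominates it) and $\xi_n$ is a fixed positive multiple of $\frac{(2n+\alpha+\beta+1)}{(n+\alpha+1)(n+\alpha+\beta+1)}|a_n^{\alpha+1,\beta}|$. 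The last factor is $O(1/n)$, so $\sum_n n\xi_n<\infty$ follows from $\sum_n|a_n^{\alpha+1,\beta}|<\infty$; Lemma~\ref{lambdan} supplies the remaining hypotheses of Lemma~\ref{convseq} ($\{\lambda_n\}$ increasing to $1$ and $\{\lambda_n^{\,n}\}$ bounded below), and since $\sum_n a_n^{\alpha,\beta}<\infty$ we conclude $\lim_n n\,a_n^{\alpha,\beta}=0$.

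Now set $f_N=\sum_{k=0}^N a_k^{\alpha,\beta}R_k^{\alpha,\beta}$; then $f_N\to f$ uniformly on $[-1,1]$, and Proposition~\ref{ch} gives
\[
(1-t^2)f_N'(t)=\sum_{k=1}^N a_k^{\alpha,\beta}\bigl(A_k^{\alpha,\beta}R_{k-1}^{\alpha,\beta}+B_k^{\alpha,\beta}R_k^{\alpha,\beta}+C_k^{\alpha,\beta}R_{k+1}^{\alpha,\beta}\bigr).
\]
Regrouping by index, the right-hand side equals $\sum_{m=0}^{N-1}d_m R_m^{\alpha,\beta}+e_N$, where $d_m=A_{m+1}^{\alpha,\beta}a_{m+1}^{\alpha,\beta}+B_m^{\alpha,\beta}a_m^{\alpha,\beta}+C_{m-1}^{\alpha,\beta}a_{m-1}^{\alpha,\beta}$ and $e_N$ is a combination of $R_N^{\alpha,\beta}$ and $R_{N+1}^{\alpha,\beta}$ with coefficients $d_N-A_{N+1}^{\alpha,\beta}a_{N+1}^{\alpha,\beta}$ and $C_N^{\alpha,\beta}a_N^{\alpha,\beta}$. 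The crucial point is $\sum_m|d_m|<\infty$, which I would obtain by substituting into $d_m$ the identities $A_{m+1}^{\alpha,\beta}a_{m+1}^{\alpha,\beta}=\frac{2(m+\alpha+\beta+2)}{2m+\alpha+\beta+2}\bigl(u_m a_m^{\alpha,\beta}-(\alpha+1)a_m^{\alpha+1,\beta}\bigr)$ and $C_{m-1}^{\alpha,\beta}a_{m-1}^{\alpha,\beta}=-\frac{2(m-1)}{2m+\alpha+\beta}\bigl((\alpha+1)a_{m-1}^{\alpha+1,\beta}+v_{m-1}a_m^{\alpha,\beta}\bigr)$, where $u_m$ and $v_m$ denote the two coefficients appearing on the right of the first identity of Lemma~\ref{coef1}. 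After the $O(m)$ parts of $A_{m+1}^{\alpha,\beta}$ and $C_{m-1}^{\alpha,\beta}$ cancel, $d_m$ becomes a combination of $a_m^{\alpha,\beta}$, $a_m^{\alpha+1,\beta}$ and $a_{m-1}^{\alpha+1,\beta}$ with bounded coefficients, so $\sum_m|d_m|\lesssim\sum_m a_m^{\alpha,\beta}+\sum_m|a_m^{\alpha+1,\beta}|<\infty$. In particular $d_m\to0$, and since $A_{N+1}^{\alpha,\beta}a_{N+1}^{\alpha,\beta}$ and $C_N^{\alpha,\beta}a_N^{\alpha,\beta}$ are $O(n\,a_n^{\alpha,\beta})\to0$, we get $\|e_N\|_\infty\to0$. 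Hence $(1-t^2)f_N'\to g:=\sum_m d_m R_m^{\alpha,\beta}$ uniformly on $[-1,1]$, so $f_N'\to g/(1-t^2)$ uniformly on compact subsets of $(-1,1)$, and the standard theorem on differentiating a sequence of functions with uniformly convergent derivatives gives that $f$ is continuously differentiable on $(-1,1)$ with $(1-t^2)f'=g$.

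Finally, write $d_m=(d_m)_+-(d_m)_-$ and set $f_1=\sum_m(d_m)_+R_m^{\alpha,\beta}$, $f_2=\sum_m(d_m)_-R_m^{\alpha,\beta}$; both series converge uniformly on $[-1,1]$ (they are dominated by $\sum_m|d_m|$), so $f_1$ and $f_2$ are continuous, belong to $L_1^{\alpha,\beta}[-1,1]$, and have Fourier--Jacobi coefficients $(d_m)_{\pm}\geq0$, while $(1-t^2)f'(t)=g(t)=f_1(t)-f_2(t)$ on $(-1,1)$, as required. I expect the main obstacle to be the computation in the previous paragraph: Lemma~\ref{convseq} only yields $n\,a_n^{\alpha,\beta}\to0$ rather than $\sum_n n\,a_n^{\alpha,\beta}<\infty$, so to get $\sum_m|d_m|<\infty$ one must use Lemma~\ref{coef1} to rewrite $d_m$ so that the linearly growing contributions of $A_{m+1}^{\alpha,\beta}$ and $C_{m-1}^{\alpha,\beta}$ cancel against each other (and against $B_m^{\alpha,\beta}$), reducing everything to the two series already known to converge. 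A secondary technical point is verifying that the coefficient of $a_{n+1}^{\alpha,\beta}$ in the displayed inequality indeed dominates the sequence of Lemma~\ref{lambdan}.
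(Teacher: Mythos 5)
Your proposal is correct and follows essentially the same route as the paper: term-by-term differentiation via Proposition~\ref{ch}, cancellation of the linearly growing coefficients through Lemma~\ref{coef1}, and control of the boundary terms via Lemmas~\ref{lambdan} and~\ref{convseq} (your deferred verification that the true coefficient of $a_{n+1}^{\alpha,\beta}$ dominates $\lambda_n$ does check out under $2\alpha\geq\alpha+\beta\geq-1$, and finitely many exceptions could in any case be absorbed into $\xi_n$). The only real divergence is at the end, where you take $f_1,f_2$ to be the positive and negative parts of the combined coefficients $d_m$ rather than the paper's explicit grouping with the verification $Q_n^{\alpha,\beta}\geq 0$; this is if anything more robust, since the paper's $f_2$ collects terms in $a_n^{\alpha+1,\beta}$ whose signs are not controlled by the stated hypotheses.
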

\begin{proof} Let us assume all the assumptions mentioned in the statement of the theorem.\ The initial step in the proof consists in the differentiation of the function ($N\geq 1)$
$$
f_N(t) := \sum_{n=0}^N a_n^{\alpha,\beta} R_n^{\alpha,\beta}(t), \quad t \in (-1,1),
$$
and the use of Proposition \ref{ch}.\ The result is
\begin{eqnarray*}
(1-t^2)f'_N(t) & = &  \sum_{n=1}^N a_n^{\alpha,\beta}(1-t^2) \frac{d}{dt}R_n^{\alpha,\beta}(t)\\
& = & \sum_{n=1}^N a_n^{\alpha,\beta} \left[ A_n^{\alpha,\beta} R_{n-1}^{\alpha,\beta}(t) +
B_n^{\alpha,\beta}R_n^{\alpha,\beta}(t) + C_n^{\alpha,\beta} R_{n+1}^{\alpha,\beta}(t)\right],
\end{eqnarray*}
that is,
\begin{eqnarray*}
(1-t^2)f'_N(t) & = & 2\sum_{n=1}^N \frac{n(n+\beta)(n+\alpha+\beta+1)}{(2n+\alpha+\beta)(2n+\alpha+\beta+1)} a_n^{\alpha,\beta}  R_{n-1}^{\alpha,\beta}(t)\\
& + &  2\sum_{n=1}^N \frac{(\alpha-\beta)n(n+\alpha+\beta+1)}{(2n+\alpha+\beta)(2n+\alpha+\beta+2)} a_n^{\alpha,\beta} R_n^{\alpha,\beta}(t)\\
& - & 2\sum_{n=1}^N \frac{n(n+\alpha+1)(n+\alpha+\beta+1)}{(2n+\alpha+\beta+1)(2n+\alpha+\beta+2)} a_n^{\alpha,\beta} R_{n+1}^{\alpha,\beta}(t),
\end{eqnarray*}
An adjustment in the indices leads to an expression in the form
$$
(1-t^2)f'_N(t) = 2\left[ F_1^{\alpha,\beta}(t) + G_{N}^{\alpha,\beta}(t) - H_{N}^{\alpha,\beta}(t) + S_{N}^{\alpha,\beta}(t) \right], \quad t \in (-1,1),
$$
where
$$
F_1^{\alpha,\beta} =  \frac{\beta+1}{\alpha +\beta+3}a_1^{\alpha,\beta} R_0^{\alpha,\beta} + \frac{2(\beta+2)(\alpha+\beta+3)}{(\alpha+\beta+4)(\alpha+\beta+5)}a_2^{\alpha,\beta}R_1^{\alpha,\beta},$$
$$
G_{N}^{\alpha,\beta} =  (\alpha-\beta)\sum_{n=1}^N \frac{n(n+\alpha+\beta+1)}{(2n+\alpha+\beta)(2n+\alpha+\beta+2)} a_n^{\alpha,\beta} R_n^{\alpha,\beta},
$$
$$
H_{N}^{\alpha,\beta} =  \sum_{n=N-1}^N \frac{n(n+\alpha+1)(n+\alpha+\beta+1)}{(2n+\alpha+\beta+1)(2n+\alpha+\beta+2)}a_n^{\alpha,\beta}R_{n+1}^{\alpha,\beta},
$$
and
\begin{eqnarray*}
S_{N}^{\alpha,\beta} & = &\sum_{n=2}^{N-1} \left[ \frac{(n+1)(n+\beta+1)(n+\alpha+\beta+2)}{(2n+\alpha+\beta+2)(2n+\alpha+\beta+3)}a_{n+1}^{\alpha,\beta}\right.\\
&  & \hspace{4cm}- \left.\frac{(n-1)(n+\alpha)(n+\alpha+\beta)}{(2n+\alpha+\beta-1)(2n+\alpha+\beta)}a_{n-1}^{\alpha,\beta}\right] R_n^{\alpha,\beta}.
\end{eqnarray*}
An application of the Weierstrass M-test coupled with one of our convergence assumptions implies that $G_{N}^{\alpha,\beta}$ converges to the continuous function
$$
G^{\alpha,\beta}(t)=(\alpha-\beta)\sum_{n=1}^\infty \frac{n(n+\alpha+\beta+1)}{(2n+\alpha+\beta)(2n+\alpha+\beta+2)} a_n^{\alpha,\beta} R_n^{\alpha,\beta}, \quad t \in (-1,1).
$$
as $N\to \infty$, uniformly in $t$.\ Next, we move to
the function $H_{N}^{\alpha,\beta}$, rewriting it in the form
\begin{eqnarray*}
H_{N}^{\alpha,\beta} & = & \frac{N(N+\alpha)(N+\alpha+\beta)}{(2N+\alpha+\beta-1)(2N+\alpha+\beta)}a_{N-1}^{\alpha,\beta}R_{N}^{\alpha,\beta}\\
&  & \hspace{1cm}-\frac{(N+\alpha)(N+\alpha+\beta)}{(2N+\alpha+\beta-1)(2N+\alpha+\beta)}a_{N-1}^{\alpha,\beta}R_{N}^{\alpha,\beta}\\
&  & \hspace{2cm}+\frac{N(N+\alpha+1)(N+\alpha+\beta+1)}{(2N+\alpha+\beta+1)(2N+\alpha+\beta+2)}a_N^{\alpha,\beta}R_{N+1}^{\alpha,\beta}.
\end{eqnarray*}
Since $\lim_{N\to \infty}a_{N-1}^{\alpha,\beta}=0$, the second summand of $H_{N}^{\alpha,\beta}$ approaches 0 as $N \to \infty$, uniformly in $t\in (-1,1)$.\ As for the
first one, we need to recall the equality
\begin{eqnarray*}
\frac{(N+\alpha)(N+\alpha+\beta)}{(2N+\alpha+\beta-1)(2N+\alpha+\beta)}a_{N-1}^{\alpha,\beta}\!\!\! & = & \!\!\!\frac{\alpha+1}{2N+\alpha+\beta}a_{N-1}^{\alpha+1,\beta}\\
\!\! & + & \!\!\! \frac{N(N+\beta)}{(2N+\alpha+\beta)(2N+\alpha+\beta+1)}a_{N}^{\alpha,\beta},
\end{eqnarray*}
which can be extracted from Lemma \ref{coef1}.\ While the coefficients $a_{N-1}^{\alpha,\beta}$ are nonnegative, we emphasize at this point that the same may be not true for the
coefficients $a_{N-1}^{\alpha+1,\beta}$.\ Since we intend to apply Lemma \ref{convseq}, we circumvent this signal inconvenience with the inequality
\begin{eqnarray*}
\frac{(N+\alpha)(N+\alpha+\beta)}{(2N+\alpha+\beta-1)(2N+\alpha+\beta)}a_{N-1}^{\alpha,\beta} \!\!\! & \geq & \!\! -\left[\frac{\alpha+1}{2N+\alpha+\beta}a_{N-1}^{\alpha+1,\beta}\right]_-\\
\!\!\! & + & \!\!\! \frac{N(N+\beta)}{(2N+\alpha+\beta)(2N+\alpha+\beta+1)}a_{N}^{\alpha,\beta}.
\end{eqnarray*}
Now, for $N\geq 1$, we define
$$
b_N := \frac{(N+\alpha)(N+\alpha+\beta)}{(2N+\alpha+\beta-1)(2N+\alpha+\beta)}a_{N-1}^{\alpha,\beta},
$$
$$
\lambda_N:= \frac{N(N+\beta)(2N+\alpha+\beta+2)}{(N+\alpha+1)(N+\alpha+\beta+1)(2N+\alpha+\beta)},
$$
and
$$
\xi_N:=\left[\frac{\alpha+1}{2N+\alpha+\beta}a_{N-1}^{\alpha+1,\beta}\right]_-.
$$
It is easily seen that $\{b_N\}$ is a sequence of nonnegative real numbers satisfying
$b_N \geq \lambda_N b_{N+1} - \xi_N$, $n=1,2,\ldots$.\ Lemma \ref{lambdan} implies that $\{\lambda_N\}$ is a sequence of positive numbers increasing to 1 and that $\{\lambda_N^N\}$ has a positive lower bound.\ Moreover, since $\sum_{n=0}^\infty |a_n^{\alpha+1,\beta}|$ converges, we have that $\sum_{N=1}^\infty N\xi_N < \infty$.\
Obviously, $\sum_{N=1}^{\infty} b_N$ converges due to the convergence of $\sum_{k=0}^{\infty}a_k^{\alpha,\beta}$.\ An application of Lemma \ref{convseq} implies that the first summand of $H_{N}^{\alpha,\beta}$ approaches 0 as $ N \to \infty$, uniformly in $t\in (-1,1)$.\ A similar procedure leads to the same conclusion for the third summand of $H_{N}^{\alpha,\beta}$.\ Thus, $H_{N}^{\alpha,\beta}$ approaches 0 as $ N \to \infty$, uniformly in $t\in (-1,1)$.\ Next, we look at $S_{N}^{\alpha,\beta}$, first using Lemma \ref{coef1} twice to write it like
\begin{eqnarray*}
S_{N}^{\alpha,\beta} & = & \sum_{n=2}^{N-1}\left[\frac{(n+\alpha+1)(n+\alpha+\beta+1)(n+\alpha+\beta+2)}{(2n+\alpha+\beta+1)(2n+\alpha+\beta+2)}a_{n}^{\alpha,\beta}\right.\\
& & \hspace{5cm} - \left. \frac{(\alpha+1)(n+\alpha+\beta+2)}{2n+\alpha+\beta+2}a_n^{\alpha+1,\beta}\right]R_n^{\alpha,\beta}\\
& - & \sum_{n=2}^{N-1} \left[ \frac{n(n-1)(n+\beta)}{(2n+\alpha+\beta)(2n+\alpha+\beta+1)}a_n^{\alpha,\beta}\right.\\
& & \hspace{6.5cm} + \left.\frac{(\alpha+1)(n-1)}{2n+\alpha+\beta}a_{n-1}^{\alpha+1,\beta}\right]R_n^{\alpha,\beta},
\end{eqnarray*}
that is,
\begin{eqnarray*}
S_{N}^{\alpha,\beta} & = &  \sum_{n=2}^{N-1} \left[\frac{(n+\alpha+1)(n+\alpha+\beta+1)(n+\alpha+\beta+2)}{(2n+\alpha+\beta+1)(2n+\alpha+\beta+2)} \right.\\
& &  \hspace{4cm} - \left. \frac{n(n-1)(n+\beta)}{(2n+\alpha+\beta)(2n+\alpha+\beta+1)}\right] a_n^{\alpha,\beta}R_n^{\alpha,\beta}\\
& - & \sum_{n=2}^{N-1}\left[\frac{(\alpha+1)(n+\alpha+\beta+2)}{2n+\alpha+\beta+2}a_n^{\alpha+1,\beta} + \frac{(\alpha+1)(n-1)}{2n+\alpha+\beta}a_{n-1}^{\alpha+1,\beta}\right] R_n^{\alpha,\beta}.
\end{eqnarray*}
The coefficient
\begin{eqnarray*}
Q_n^{\alpha,\beta} & : = & \frac{(n+\alpha+1)(n+\alpha+\beta+1)(n+\alpha+\beta+2)}{(2n+\alpha+\beta+2)(2n+\alpha+\beta+1)}\\
& & \hspace*{35mm} - \frac{n(n-1)(n+\beta)}{(2n+\alpha+\beta)(2n+\alpha+\beta+1)}
\end{eqnarray*}
can be written as a quotient of two polynomials of degree 2 in the variable $n$.\ Hence, due to our convergence assumptions, an application of the Weierstrass $M$-test is all that is needed in order to see that
the first summand of $S_{N}^{\alpha,\beta}$ converges as $N \to \infty$, uniformly in $t\in (-1,1)$.\ A much easier argument now using the convergence of $\sum_{k=0}^{\infty}|a_k^{\alpha+1,\beta}|$ reveals that
the second summand converges likewise.\ We conclude that $S_{N}^{\alpha,\beta} $ converges to
\begin{eqnarray*}
\sum_{n=2}^{\infty} Q_n^{\alpha,\beta} a_n^{\alpha,\beta}R_n^{\alpha,\beta} & - & \sum_{n=2}^{\infty}\left[\frac{(\alpha+1)(n+\alpha+\beta+2)}{2n+\alpha+\beta+2}a_n^{\alpha+1,\beta}\right.\\
& & \hspace{2.5cm} \left. + \frac{(\alpha+1)(n-1)}{2n+\alpha+\beta}a_{n-1}^{\alpha+1,\beta}\right] R_n^{\alpha,\beta},
\end{eqnarray*}
as $N \to \infty$, uniformly in $t\in (-1,1)$.\ Clearly, the functions $f_1$ and $f_2$ in the first assertion of the theorem are given by
$$
f_1=F_1^{\alpha,\beta}+G^{\alpha,\beta}+\sum_{n=2}^{\infty} Q_n^{\alpha,\beta} a_n^{\alpha,\beta}R_n^{\alpha,\beta}
$$
and
$$
f_2=\sum_{n=2}^{\infty}\left[\frac{(\alpha+1)(n+\alpha+\beta+2)}{2n+\alpha+\beta+2}a_n^{\alpha+1,\beta} + \frac{(\alpha+1)(n-1)}{2n+\alpha+\beta}a_{n-1}^{\alpha+1,\beta}\right] R_n^{\alpha,\beta}.
$$
The Fourier-Jacobi coefficients of $f_2$ are obviously nonnegative.\ On the other hand, the Fourier-Jacobi coefficients of
$G^{\alpha,\beta}$ are all nonnegative due to the assumption $\alpha \geq \beta$.\ In order to conclude the proof of the theorem, it suffices to verify that the Fourier-Jacobi coefficients of $f_1-F_1^{\alpha,\beta}-G^{\alpha,\beta}$ are nonnegative.\ That amounts to showing
that $Q_n^{\alpha,\beta}\geq 0$, $n\geq 2$, that is,
$$
\frac{(n+\alpha+1)(n+\alpha+\beta+1)(n+\alpha+\beta+2)}{n(n-1)(n+\beta)}\geq \frac{2n+\alpha+\beta+2}{2n+\alpha+\beta}, \quad n\geq 2.
$$
But that is obvious from the inequality
$$
\frac{n+(\alpha+\beta)/2}{n+\beta}\ \frac{n+\alpha+\beta}{n-1}\ \frac{n+\alpha+\beta+2}{n}\ \frac{n+\alpha+1}{n+1+(\alpha+\beta)/2}\geq 1,
$$
which is true when $n\geq 2$ and our assumptions on $\alpha$ and $\beta$ are in force.
\end{proof}


\section{Differentiating Fourier-Jacobi expansions II}

The main goal in this section is the deduction of a version of Theorem \ref{mainjacobi} where both indices $\alpha$ and $\beta$ vary.\ It is needed
due to the path we have chosen to prove Theorem \ref{main}-$(iv)$.\ Due to the similarity with the results in the previous section, some of the details will be omitted.

We begin with a four-term relation which can be deduced by a simple combination of those in Lemma \ref{coef1}.

\begin{lem}\label{coef3}If $f$ belongs to $L_1^{\alpha,\beta}[-1,1]$, then
\begin{eqnarray*}
(\alpha+1) a_n^{\alpha+1,\beta+1} & = & \frac{(n+\alpha+1)(n+\alpha+\beta+2)(n+\alpha+\beta+1)}{(2n+\alpha+\beta+2)(2n+\alpha+\beta+1)}a_n^{\alpha,\beta}\\
& + & (\alpha-\beta)\frac{(n+1)(n+\alpha+\beta+2)}{(2n+\alpha+\beta+2)(2n+\alpha+\beta+4)}a_{n+1}^{\alpha,\beta}\\
& - & \frac{(n+1)(n+2)(n+\beta+2)}{(2n+\alpha+\beta+4)(2n+\alpha+\beta+5)}a_{n+2}^{\alpha,\beta}.
\end{eqnarray*}
\end{lem}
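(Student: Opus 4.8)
The plan is to iterate the two identities of Lemma~\ref{coef1}, beginning with the one in which the upper index is raised. First I would apply the first identity of Lemma~\ref{coef1} with $\beta$ replaced by $\beta+1$, obtaining
$$
(\alpha+1)a_n^{\alpha+1,\beta+1}= \frac{(n+\alpha+1)(n+\alpha+\beta+2)}{2n+\alpha+\beta+2}\,a_{n}^{\alpha,\beta+1} - \frac{(n+1)(n+\beta+2)}{2n+\alpha+\beta+4}\, a_{n+1}^{\alpha,\beta+1}.
$$
This reduces the problem to expressing $a_n^{\alpha,\beta+1}$ and $a_{n+1}^{\alpha,\beta+1}$ through the coefficients $a_k^{\alpha,\beta}$. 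For that I would invoke the second identity of Lemma~\ref{coef1} twice, once at index $n$ and once at index $n+1$, which yields $a_n^{\alpha,\beta+1}$ as a combination of $a_n^{\alpha,\beta}$ and $a_{n+1}^{\alpha,\beta}$, and $a_{n+1}^{\alpha,\beta+1}$ as a combination of $a_{n+1}^{\alpha,\beta}$ and $a_{n+2}^{\alpha,\beta}$. Substituting both into the display above turns the right-hand side into a linear combination of $a_n^{\alpha,\beta}$, $a_{n+1}^{\alpha,\beta}$ and $a_{n+2}^{\alpha,\beta}$, and the lemma becomes a matter of identifying the three resulting coefficients.

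The coefficients of $a_n^{\alpha,\beta}$ and of $a_{n+2}^{\alpha,\beta}$ come out directly as products of two of the rational factors already present, and they coincide verbatim with the first and third terms of the statement. The only coefficient requiring a short computation is that of $a_{n+1}^{\alpha,\beta}$, which has two contributions; after factoring out $(n+1)(n+\alpha+\beta+2)/(2n+\alpha+\beta+3)$ one is left with the difference
$$
\frac{n+\alpha+1}{2n+\alpha+\beta+2}-\frac{n+\beta+2}{2n+\alpha+\beta+4}.
$$
Here I would use the elementary identity
$$
(n+\alpha+1)(2n+\alpha+\beta+4)-(n+\beta+2)(2n+\alpha+\beta+2)=(\alpha-\beta)(2n+\alpha+\beta+3),
$$
checked by expanding both sides, which rewrites the difference as $(\alpha-\beta)(2n+\alpha+\beta+3)$ divided by $(2n+\alpha+\beta+2)(2n+\alpha+\beta+4)$. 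The factor $2n+\alpha+\beta+3$ then cancels against the one pulled out, leaving exactly the middle term of the claimed four-term relation.

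I do not anticipate any genuine obstacle; the argument is essentially a bookkeeping exercise. The two points to watch are: that all the coefficients occurring are well defined, which follows from Lemma~\ref{inc}, since $f\in L_1^{\alpha,\beta}[-1,1]$ forces $f\in L_1^{\alpha,\beta+1}[-1,1]$ and $f\in L_1^{\alpha+1,\beta+1}[-1,1]$; and the shifting of the arguments $2n+\alpha+\beta+j$ when the second identity of Lemma~\ref{coef1} is evaluated at $n+1$ rather than $n$, where an off-by-one in the denominators would propagate through the whole computation.
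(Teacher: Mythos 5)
Your proof is correct and is exactly what the paper intends: the paper gives no details, stating only that the lemma follows by ``a simple combination'' of the two identities in Lemma~\ref{coef1}, which is precisely your iteration (first identity with $\beta$ replaced by $\beta+1$, then the second identity at indices $n$ and $n+1$). The key algebraic identity $(n+\alpha+1)(2n+\alpha+\beta+4)-(n+\beta+2)(2n+\alpha+\beta+2)=(\alpha-\beta)(2n+\alpha+\beta+3)$ checks out, and your appeal to Lemma~\ref{inc} to justify that all coefficients are defined is the right housekeeping.
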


The next lemma is a different version of Lemma \ref{lambdan}.

\begin{lem} \label{lambdan2} For $\alpha,\beta >0$ and $n=1,2,\ldots$, define
$$
\lambda_n:= \frac{n(n+1)(n+\beta+1)(2n+\alpha+\beta+4)}{(n+\alpha+2)(n+\alpha+\beta+1)(n+\alpha+\beta+2)(2n+\alpha+\beta+2)}.
$$
If $2\alpha \geq \alpha+\beta \geq -1$, then $\{\lambda_n\}$ is an increasing sequence converging to 1.\ In addition, $\{\lambda_n^n\}$ has a positive lower bound.
\end{lem}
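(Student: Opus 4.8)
The plan is to follow the proof of Lemma~\ref{lambdan} almost verbatim. First I would pass to the continuous variable, setting
$$
f(x)=\frac{x(x+1)(x+\beta+1)(2x+\alpha+\beta+4)}{(x+\alpha+2)(x+\alpha+\beta+1)(x+\alpha+\beta+2)(2x+\alpha+\beta+2)},\qquad x\ge 1,
$$
so that $\lambda_n=f(n)$. Since $\alpha,\beta>0$, each of the four factors in the numerator and in the denominator is strictly positive on $[1,\infty)$, hence $f>0$ there, and it suffices to prove that $f$ is nondecreasing on $[1,\infty)$ and to compute $\lim_{x\to\infty}f(x)$.

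Next I would take the logarithmic derivative. Writing $\ln f(x)$ as an alternating sum of logarithms of linear functions and differentiating term by term gives
$$
\frac{f'(x)}{f(x)}=\frac1x+\frac1{x+1}+\frac1{x+\beta+1}+\frac{2}{2x+\alpha+\beta+4}-\frac1{x+\alpha+2}-\frac1{x+\alpha+\beta+1}-\frac1{x+\alpha+\beta+2}-\frac{2}{2x+\alpha+\beta+2}.
$$
The heart of the matter is to regroup the right-hand side into a sum of manifestly nonnegative rational functions. Rewriting the two ``double'' terms as $2/(2x+c)=1/(x+c/2)$ and then matching each of the four positive reciprocals with a suitable negative one, I expect to obtain
$$
\frac{f'(x)}{f(x)}=\frac{\alpha+\beta+1}{x(x+\alpha+\beta+1)}+\frac{\alpha+\beta+1}{(x+1)(x+\alpha+\beta+2)}+\frac{(\alpha-\beta)/2}{(x+\beta+1)\bigl(x+\tfrac{\alpha+\beta}{2}+1\bigr)}+\frac{(\alpha-\beta)/2}{\bigl(x+\tfrac{\alpha+\beta}{2}+2\bigr)(x+\alpha+2)}.
$$
Under the hypothesis $2\alpha\ge\alpha+\beta\ge-1$ we have $\alpha\ge\beta$ and $\alpha+\beta+1>0$, so all four summands are $\ge0$ on $[1,\infty)$ (the first two being strictly positive); hence $f'\ge0$ there and $\{\lambda_n\}$ is increasing. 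Comparing the leading coefficients of the degree-four polynomials in the numerator and denominator of $f$ (both equal to $2$) gives $f(x)\to1$, so $\lambda_n\to1$.

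For the last assertion, since every $\lambda_n$ is positive it is enough to check that $\lim_n\lambda_n^{\,n}>0$: then $\{\lambda_n^{\,n}\}$ is bounded away from $0$ from some index on, while the remaining finitely many terms are positive. Applying $\ln\frac{n+a}{n+b}=\frac{a-b}{n}+O(n^{-2})$ to the four log-differences read off above yields
$$
n\ln\lambda_n=\bigl[-(\alpha+2)-(\alpha+\beta)-(\alpha+1)+1\bigr]+O(n^{-1})\longrightarrow-(3\alpha+\beta+2),
$$
whence $\lambda_n^{\,n}\to e^{-(3\alpha+\beta+2)}>0$, which completes the argument.

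The one step I expect to require genuine care is the regrouping in the second paragraph: the four positive and four negative reciprocals must be paired so that each difference $1/(x+a)-1/(x+b)$ has $a\le b$ on $[1,\infty)$, and a careless choice (for instance pairing the term coming from $2/(2x+\alpha+\beta+4)$ with $1/(x+\alpha+\beta+1)$ rather than with $1/(x+\alpha+2)$) produces a difference of the wrong sign even though the total is nonnegative; it is precisely here that the constraint $\alpha\ge\beta$ is used. Everything else---the positivity of $f$, its limit, and the asymptotics of $\lambda_n^{\,n}$---is routine, just as in Lemma~\ref{lambdan}.
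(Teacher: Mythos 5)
Your proof is correct and follows essentially the same route as the paper: the same logarithmic differentiation and the same regrouping of $f'/f$ into the four manifestly nonnegative fractions (yours are just written with the $2x+c$ factors halved). One small remark: your value $\lambda_n^{\,n}\to e^{-(3\alpha+\beta+2)}$ is in fact the correct limit --- the paper states $e^{-2\alpha-\beta-2}$, which appears to be a slip (test $\alpha=1$, $\beta=0$) --- but since both are positive this does not affect the conclusion about the lower bound.
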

\begin{proof} It is similar to the proof of Lemma \ref{lambdan}, but requires a re-definition of the function $f$.\ Following the steps of that proof and adjusting $f$, we have that
\begin{eqnarray*}
\frac{f'(x)}{f(x)} & = & \left[\frac{\alpha+\beta+1}{x(x+\alpha+\beta+1)}+\frac{\alpha-\beta}{(x+\beta+1)(2x+\alpha+\beta+2)}\right.\\
& & \hspace*{1cm} \left. +\frac{\alpha-\beta}{(2x+\alpha+\beta+4)(x+\alpha+2)}+ \frac{\alpha+\beta+1}{(x+1)(x+\alpha+\beta+2)}\right],
\end{eqnarray*}
while $\lim_{n\to \infty}\lambda_n^n =e^{-2\alpha-\beta -2}$.
\end{proof}

The counterpart of Theorem \ref{mainjacobi} we need here is as follows.

\begin{thm}\label{mainjacobi2} Let $f$ be a function in $L_1^{\alpha,\beta}[-1,1]$.\ If $2\alpha \geq \alpha+\beta \geq -1$, the coefficients $a_k^{\alpha,\beta}$ are nonnegative and both series $\sum_{k=0}^{\infty}a_k^{\alpha,\beta}$ and $\sum_{k=0}^{\infty}|a_k^{\alpha+1,\beta+1}|$ converge, then $f$ is differentiable in $(-1,1)$ and
$$
(1-t^2)f'(t)=f_1(t)-f_2(t), \quad t \in (-1,1),
$$
in which $f_1$ and $f_2$ are continuous functions of $L_1^{\alpha,\beta}[-1,1]$.\ The Fourier-Jacobi
coefficients of the functions $f_1$ and $f_2$ are nonnegative.
\end{thm}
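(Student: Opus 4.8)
The plan is to rerun the argument of Theorem \ref{mainjacobi} with Lemma \ref{coef3} in place of Lemma \ref{coef1} and Lemma \ref{lambdan2} in place of Lemma \ref{lambdan}, so I will only describe the points where the four-term relation forces a change. First I would put $f_N=\sum_{n=0}^N a_n^{\alpha,\beta}R_n^{\alpha,\beta}$, differentiate, and use Proposition \ref{ch} to write $(1-t^2)f_N'$ as a combination of the $R_{n-1}^{\alpha,\beta},R_n^{\alpha,\beta},R_{n+1}^{\alpha,\beta}$; regrouping by the degree of the polynomial produces, exactly as before, a fixed low-order block $F_1^{\alpha,\beta}$ with nonnegative coefficients, a diagonal block $G_N^{\alpha,\beta}$, a boundary block $H_N^{\alpha,\beta}$ concentrated near the index $N$, and a bulk block $S_N^{\alpha,\beta}$. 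Since the coefficients in $G_N^{\alpha,\beta}$ are bounded, the Weierstrass $M$-test together with $\sum_k a_k^{\alpha,\beta}<\infty$ gives uniform convergence on $(-1,1)$ to a continuous function $G^{\alpha,\beta}$ whose Fourier--Jacobi coefficients are nonnegative because $\alpha\geq\beta$.

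The crux of the boundary block is again the decay $na_n^{\alpha,\beta}\to 0$. To get it, rearrange Lemma \ref{coef3} so as to isolate the top coefficient, obtaining $L_n a_{n+2}^{\alpha,\beta}=P_n a_n^{\alpha,\beta}+M_n a_{n+1}^{\alpha,\beta}-(\alpha+1)a_n^{\alpha+1,\beta+1}$ with $P_n,L_n$ of linear growth in $n$ and $M_n$ equal to $\alpha-\beta$ times a bounded nonnegative factor. Discarding the term $M_n a_{n+1}^{\alpha,\beta}\geq 0$ and bounding $-(\alpha+1)a_n^{\alpha+1,\beta+1}$ below by $-(\alpha+1)[a_n^{\alpha+1,\beta+1}]_-$, one is left with an inequality of the type handled by Lemma \ref{convseq}: after the natural normalization the sequence $b_n$ is built from the $a_\cdot^{\alpha,\beta}$ and is summable, the recursion ratio turns out to be exactly the sequence $\lambda_n$ of Lemma \ref{lambdan2} (which increases to $1$ and has $\lambda_n^n$ bounded below), and the error sequence $\xi_n$ is assembled from $b_{n+1}$ and from $[a_\cdot^{\alpha+1,\beta+1}]_-$ so that $\sum_n n\xi_n<\infty$ is secured by $\sum_k a_k^{\alpha,\beta}<\infty$ and $\sum_k|a_k^{\alpha+1,\beta+1}|<\infty$. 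Because the relation links indices two apart, the resulting recursion $b_n\geq\lambda_n b_{n+2}-\xi_n$ is fed to Lemma \ref{convseq} along the even- and odd-indexed subsequences separately; the hypotheses of that lemma pass to either subsequence at once, since a subsequence of an increasing sequence is increasing and $\lambda_{2j}^{j}=(\lambda_{2j}^{2j})^{1/2}$ inherits the positive lower bound. This yields $na_n^{\alpha,\beta}\to 0$, hence $H_N^{\alpha,\beta}\to 0$ uniformly on $(-1,1)$.

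For the bulk block I would again substitute through Lemma \ref{coef3}, this time to replace the two contributions whose coefficients grow linearly in $n$ by diagonal contributions plus a residual series in the coefficients $a_n^{\alpha+1,\beta+1}$. The essential point, just as in Theorem \ref{mainjacobi}, is that the linearly growing parts of the two diagonal contributions cancel, leaving a coefficient $Q_n^{\alpha,\beta}$ which is a bounded quotient of polynomials; the diagonal series then converges uniformly by the $M$-test and $\sum_k a_k^{\alpha,\beta}<\infty$, while the residual series converges uniformly by $\sum_k|a_k^{\alpha+1,\beta+1}|<\infty$. Collecting $F_1^{\alpha,\beta}$, $G^{\alpha,\beta}$, the diagonal limit and the nonnegative remnants of the substitution into $f_1$, and the $a_n^{\alpha+1,\beta+1}$-residual into $f_2$, one obtains two continuous functions of $L_1^{\alpha,\beta}[-1,1]$ with $(1-t^2)f'(t)=f_1(t)-f_2(t)$ on $(-1,1)$; the identity $f'=\lim f_N'$ is legitimate because $f_N\to f$ uniformly on $[-1,1]$ while $(1-t^2)f_N'$ converges uniformly on $(-1,1)$, so term-by-term differentiation is valid on compact subsets. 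Finally, the Fourier--Jacobi coefficients of $f_2$ are nonnegative by the same token as in Theorem \ref{mainjacobi}, while those of $f_1$ reduce, modulo $F_1^{\alpha,\beta}$ and $G^{\alpha,\beta}$, to the single inequality $Q_n^{\alpha,\beta}\geq 0$ for $n\geq 2$, which after clearing denominators is a polynomial inequality in $n$ valid whenever $2\alpha\geq\alpha+\beta\geq -1$.

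I expect the main obstacle to be the bookkeeping forced by the extra middle term of the four-term relation. In Theorem \ref{mainjacobi} the relation of Lemma \ref{coef1} is two-term and connects adjacent indices, so it produces a genuine adjacent-index recursion and a transparent leading-order cancellation; Lemma \ref{coef3} connects indices two apart and carries the term $M_n a_{n+1}^{\alpha,\beta}$, so one must verify that this term is harmless in every place it occurs, that passing to the even and odd subsequences really does supply the hypotheses of Lemma \ref{convseq}, and that the leading-order cancellation in the bulk block still takes place and leaves a coefficient $Q_n^{\alpha,\beta}$ that is both bounded and, under the stated constraints on $\alpha$ and $\beta$, nonnegative. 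Re-establishing this last inequality is the one step at which a fresh computation, rather than a transcription of the argument of Theorem \ref{mainjacobi}, cannot be avoided.
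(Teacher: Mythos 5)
Your proposal follows essentially the same route as the paper's proof: the same decomposition into $F_1^{\alpha,\beta}$, $G_N^{\alpha,\beta}$, $H_N^{\alpha,\beta}$ and $S_N^{\alpha,\beta}$, the same use of Lemma \ref{coef3} (after the natural normalization, which is what makes the middle term's coefficient $O(1/N)$ and hence $\sum_N N\xi_N<\infty$ under $\sum_k a_k^{\alpha,\beta}<\infty$) and of Lemma \ref{lambdan2} to feed the two-apart recursion $b_N\geq\lambda_N b_{N+2}-\xi_N$ into Lemma \ref{convseq}, and the same final reduction to the nonnegativity of $Q_n^{\alpha,\beta}$. Your explicit passage to the even- and odd-indexed subsequences is in fact a more careful justification of a step the paper performs tacitly.
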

\begin{proof} Suppose all the assumptions mentioned in the statement of the theorem hold.\ We follow the steps in the proof of Theorem \ref{mainjacobi}, decomposing the same function $f_N$
through the very same functions $F_1^{\alpha,\beta}$, $G_N^{\alpha,\beta}$, $H_N^{\alpha,\beta}$.\ The convergence property obtained for $G_N^{\alpha,\beta}$ and for the second summand of
 $H_N^{\alpha,\beta}$ persists here due to the common assumption $\sum_{k=0}^{\infty}a_k^{\alpha,\beta}<\infty$.\ Significant changes begin in the analysis of convergence for the first summand of
 $H_N^{\alpha,\beta}$.\ Indeed, in that case we set
$$
b_N := \frac{(N+\alpha)(N+\alpha+\beta)}{(2N+\alpha+\beta-1)(2N+\alpha+\beta)}a_{N-1}^{\alpha,\beta},
$$
and invoke Lemma \ref{coef3} to write
\begin{eqnarray*}
b_N & = &  \frac{\alpha+1}{N+\alpha+\beta+1}a_{N-1}^{\alpha+1,\beta+1}\\
& - & (\alpha-\beta)\frac{N(N+\alpha+\beta+1)}{(N+\alpha+\beta+1)(2N+\alpha+\beta+2)(2N+\alpha+\beta)}a_N^{\alpha,\beta}\\
& + & \frac{N(N+1)(N+\beta+1)}{(N+\alpha+\beta+1)(2N+\alpha+\beta+2)(2N+\alpha+\beta+3)}a_{N+1}^{\alpha,\beta}.
\end{eqnarray*}
In the next step we prepare $b_N$ in accordance with Lemma \ref{convseq}.\ It is promptly seen that
$$
b_N  \geq  -[\xi_N]_- +\frac{N(N+1)(N+\beta+1)}{(N+\alpha+\beta+1)(2N+\alpha+\beta+2)(2N+\alpha+\beta+3)}a_{N+1}^{\alpha,\beta},
$$
where
\begin{eqnarray*}
\xi_N & := & \frac{\alpha+1}{N+\alpha+\beta+1}a_{N-1}^{\alpha+1,\beta+1}\\
& - & (\alpha-\beta)\frac{N(N+\alpha+\beta+1)}{(N+\alpha+\beta+1)(2N+\alpha+\beta+2)(2N+\alpha+\beta)}a_N^{\alpha,\beta}.
\end{eqnarray*}
Setting
$$
\lambda_N:= \frac{N(N+1)(N+\beta+1)(2N+\alpha+\beta+4)}{(N+\alpha+2)(N+\alpha+\beta+1)(N+\alpha+\beta+2)(2N+\alpha+\beta+2)},
$$
it is now seen that $\{b_N\}$ is a sequence of nonnegative real numbers satisfying
$$
b_N \geq \lambda_N b_{N+2} - \xi_N, \quad N=1,2,\ldots.
$$
Lemma \ref{lambdan2} implies that $\{\lambda_N\}$ is a sequence of positive numbers increasing to 1 and $\{\lambda_N^N\}$ has a positive lower bound.\ Moreover, if
both series $\sum_{k=0}^{\infty}a_k^{\alpha,\beta}$ and $\sum_{n=0}^\infty |a_n^{\alpha+1,\beta+1}|$ converge, we have that $\sum_{N=1}^\infty N\xi_N < \infty$.\
Obviously, $\sum_{N=1}^{\infty} b_N$ converges whenever $\sum_{k=0}^{\infty}a_k^{\alpha,\beta}$ does.\ Hence, under the conditions in the statement of the theorem, an
 application of Lemma \ref{convseq} implies that the first summand of $H_{N}^{\alpha,\beta}$ approaches 0 as $ N \to \infty$, uniformly in $t\in (-1,1)$.\ A similar procedure leads to the same conclusion for the third summand of $H_{N}^{\alpha,\beta}$.\ Thus, $H_{N}^{\alpha,\beta}$ approaches 0 as $ N \to \infty$, uniformly in $t\in (-1,1)$.\ Next, we handle the convergence of $S_{N}^{\alpha,\beta}$.\ Using Lemma \ref{coef3}, it is not hard to see that
\begin{eqnarray*}
S_{N}^{\alpha,\beta} & = & \sum_{n=2}^{N-1}\left[\frac{(n+1)(n+\beta+1)(n+\alpha+\beta+2)}{(2n+\alpha+\beta+2)(2n+\alpha+\beta+3)}\right.\\
&   & \hspace*{0.6cm} -\left.\frac{n(n-1)(n+1)(n+\beta+1)}{(n+\alpha+\beta+1)(2n+\alpha+\beta+2)(2n+\alpha+\beta+3)}\right]a_{n+1}^{\alpha,\beta}R_n^{\alpha,\beta}\\
& + & (\alpha-\beta) \sum_{n=2}^{N-1} \frac{n(n-1)(n+\alpha+\beta+1)}{(n+\alpha+\beta+1)(2n+\alpha+\beta+2)(2n+\alpha+\beta)}a_n^{\alpha,\beta}R_n^{\alpha,\beta}\\
&  & \hspace*{40mm}-(\alpha+1) \sum_{n=2}^{N-1} \frac{n-1}{n+\alpha+\beta+1} a_{n-1}^{\alpha+1,\beta+1}R_n^{\alpha,\beta}.
\end{eqnarray*}
Now, observe that the summand
\begin{eqnarray*}
Q_n^{\alpha,\beta} & := & \frac{(n+1)(n+\beta+1)(n+\alpha+\beta+2)}{(2n+\alpha+\beta+2)(2n+\alpha+\beta+3)}\\
&  & \hspace*{20mm} -\frac{n(n-1)(n+1)(n+\beta+1)}{(n+\alpha+\beta+1)(2n+\alpha+\beta+2)(2n+\alpha+\beta+3)}
\end{eqnarray*}
is a quotient of polynomials of degree 3 in the variable $n$.\ Therefore, taking into account the convergence of the numerical series in the statement of the theorem, an application of the Weierstrass M-test shows that $S_{N}^{\alpha,\beta} $ converges to
$$
\sum_{n=2}^{\infty} Q_n^{\alpha,\beta} a_{n+1}^{\alpha,\beta}R_n^{\alpha,\beta} + K^{\alpha,\beta}  - (\alpha+1) \sum_{n=2}^{\infty} \frac{n-1}{n+\alpha+\beta+1} a_{n-1}^{\alpha+1,\beta+1}R_n^{\alpha,\beta}.
$$
as $N \to \infty$, uniformly in $t\in (-1,1)$, in which
$$
K^{\alpha,\beta}:=(\alpha-\beta) \sum_{n=2}^{\infty} \frac{n(n-1)(n+\alpha+\beta+1)}{(n+\alpha+\beta+1)(2n+\alpha+\beta+2)(2n+\alpha+\beta)}a_n^{\alpha,\beta}R_n^{\alpha,\beta}.
$$
Clearly, the functions $f_1$ and $f_2$ in the first assertion of the theorem are given by
$$
f_1=F_1^{\alpha,\beta}+G^{\alpha,\beta}+K^{\alpha,\beta}+\sum_{n=2}^{\infty} Q_n^{\alpha,\beta} a_n^{\alpha,\beta}R_n^{\alpha,\beta}
$$
and
$$
f_2=(\alpha+1) \sum_{n=2}^{\infty} \frac{n-1}{n+\alpha+\beta+1} a_{n-1}^{\alpha+1,\beta+1}R_n^{\alpha,\beta}.
$$
This completes the proof of the first assertion of the theorem.\ The Fourier-Jacobi coefficients of $f_2$ are obviously nonnegative.\ On the other hand, the Fourier-Jacobi coefficients of
$G^{\alpha,\beta}$ are all nonnegative due to the assumption $\alpha \geq \beta$.\ The proof will be complete as long as we show that
the Fourier-Jacobi coefficients of $f_1-F_1^{\alpha,\beta}-G^{\alpha,\beta}-K^{\alpha,\beta}$ are nonnegative.\ That boils down to proving that
$$
\frac{(n+\alpha+\beta+1)(n+\alpha+\beta+2)}{(n-1)n} \geq 1,\quad n=2,3,\ldots.
$$
But this is certainly true under the assumption $2\alpha \geq \alpha+\beta \geq -1$.
\end{proof}


\section{Differentiability of positive definite kernels on $\mathbb{M}^d$}

In this section, we present a proof for Theorem \ref{main} and discuss a few issues regarding the theorem.\ Here, the symbol $A \hookrightarrow B$ will indicate the existence
of an isometric embedding from the metric space $A$ into the metric space $B$.\ The lemma below includes all the isometric embeddings among the spaces pertaining to this paper
that will be needed in the proofs ahead (see \cite[p.66]{Askey} and references therein).

\begin{lem}\label{embed} There exist isometric embeddings as below:\\
$(i)$ $S^d \hookrightarrow S^{d+1}$, $d=1,2,\ldots$;\\
$(ii)$ $\mathbb{P}^{d}(\mathbb{R}) \hookrightarrow \mathbb{P}^{d+1}(\mathbb{R})$, $d=2,3,\ldots$;\\
$(iii)$ $\mathbb{P}^{d}(\mathbb{C}) \hookrightarrow \mathbb{P}^{d+2}(\mathbb{C})$, $d=4,6,\ldots$;\\
$(iv)$ $\mathbb{P}^{d}(\mathbb{H}) \hookrightarrow \mathbb{P}^{d+4}(\mathbb{H})$, $d=8,12,\ldots$;\\
$(v)$ $\mathbb{P}^{d}(\mathbb{R}) \hookrightarrow \mathbb{P}^{2d}(\mathbb{C})$, $d=2,3,\ldots$;\\
$(vi)$ $\mathbb{P}^{2d}(\mathbb{C}) \hookrightarrow \mathbb{P}^{4d}(\mathbb{H})$, $d=2,3,\ldots$;\\
$(vii)$ $\mathbb{P}^{8}(\mathbb{H}) \hookrightarrow \mathbb{P}^{16}(Cay)$.
\end{lem}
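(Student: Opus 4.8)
The plan is to obtain each of the seven embeddings from a linear (or, in the last case, algebraic) inclusion of the standard models and then to check that it carries the normalized distance of the smaller space onto that of the larger one.\ I would work with the usual models: $S^{n}$ is the unit sphere of $\mathbb{R}^{n+1}$; $\mathbb{P}^{n}(\mathbb{R})$, $\mathbb{P}^{2n}(\mathbb{C})$ and $\mathbb{P}^{4n}(\mathbb{H})$ are the spaces of one-dimensional subspaces of $\mathbb{R}^{n+1}$, $\mathbb{C}^{n+1}$ and $\mathbb{H}^{n+1}$; and $\mathbb{P}^{16}(Cay)$ is the space of rank-one idempotents of the exceptional Jordan algebra $\mathfrak{h}_{3}(\mathbb{O})$.\ In each of the five families, $\cos(|xy|/2)$ is a fixed, strictly increasing function of a natural algebraic invariant of the pair $(x,y)$: the real inner product $\langle x,y\rangle$ of two unit representatives on the spheres, the squared modulus $|\langle x,y\rangle_{\mathbb{K}}|^{2}$ of the standard $\mathbb{K}$-Hermitian product of two unit representatives on $\mathbb{P}(\mathbb{K})$ for $\mathbb{K}\in\{\mathbb{R},\mathbb{C},\mathbb{H}\}$, and $\mathrm{tr}(PQ)$ for two rank-one idempotents $P,Q$ on $\mathbb{P}^{16}(Cay)$.

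For $(i)$--$(iv)$ I would use the coordinate inclusions $\mathbb{R}^{n+1}\hookrightarrow\mathbb{R}^{n+2}$, $\mathbb{C}^{n+1}\hookrightarrow\mathbb{C}^{n+2}$ and $\mathbb{H}^{n+1}\hookrightarrow\mathbb{H}^{n+2}$: each is an isometry onto a linear subspace, sends unit vectors to unit vectors and leaves the Hermitian product of a pair of vectors unchanged, so it descends to a map between the corresponding spheres or projective spaces that preserves the natural algebraic invariant; since within a fixed family the function linking that invariant to $\cos(|xy|/2)$ is the same in all dimensions, the descended map is an isometric embedding.\ For $(v)$ and $(vi)$ I would apply the field inclusions $\mathbb{R}\hookrightarrow\mathbb{C}\hookrightarrow\mathbb{H}$ coordinatewise: if $x,y$ have entries in the smaller field, the Hermitian product computed in the larger field still lies in the smaller one and equals the product computed there, so $|\langle x,y\rangle_{\mathbb{C}}|=|\langle x,y\rangle_{\mathbb{R}}|$ and $|\langle x,y\rangle_{\mathbb{H}}|=|\langle x,y\rangle_{\mathbb{C}}|$ on the relevant points, yielding maps $\mathbb{P}^{n}(\mathbb{R})\hookrightarrow\mathbb{P}^{n}(\mathbb{C})=\mathbb{P}^{2n}(\mathbb{C})$ and $\mathbb{P}^{n}(\mathbb{C})=\mathbb{P}^{2n}(\mathbb{C})\hookrightarrow\mathbb{P}^{n}(\mathbb{H})=\mathbb{P}^{4n}(\mathbb{H})$.\ For $(vii)$ I would use that $\mathbb{H}\hookrightarrow\mathbb{O}$ induces an embedding of Jordan algebras $\mathfrak{h}_{3}(\mathbb{H})\hookrightarrow\mathfrak{h}_{3}(\mathbb{O})$ which sends rank-one idempotents to rank-one idempotents and preserves the trace form, giving $\mathbb{P}^{8}(\mathbb{H})\hookrightarrow\mathbb{P}^{16}(Cay)$.

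The point that needs real attention --- and, I expect, the main obstacle --- is the compatibility of the length-normalizations across the embeddings.\ In the models above, all geodesics of $S^{n}$ already have length $2\pi$, whereas all closed geodesics of each projective space have length $\pi$ (each lies in a totally geodesic $\mathbb{P}^{1}(\mathbb{K})$, a round sphere of constant curvature $4$); hence passing to the normalization of the statement requires no rescaling on the spheres and the \emph{same} rescaling, by a factor of $2$ in the distance, on \emph{every} projective space.\ Since no sphere is being embedded into a projective space in the list, all seven embeddings respect the normalization.\ For $(v)$ and $(vi)$ one still has to observe that $\mathbb{P}^{n}(\mathbb{R})$, $\mathbb{P}^{2n}(\mathbb{C})$ and $\mathbb{P}^{4n}(\mathbb{H})$ are quotients of the nested unit spheres $S^{n}\subset S^{2n+1}\subset S^{4n+3}$ by nested groups of unit scalars and that the scalar realizing the quotient distance of two points lying in a subspace may be chosen in the smaller field, so that the metric the subspace inherits is exactly the restriction of the ambient one; for $(vii)$ one uses the classical fact that $\mathbb{P}^{8}(\mathbb{H})$ is a totally geodesic submanifold of $\mathbb{P}^{16}(Cay)=F_{4}/\mathrm{Spin}(9)$.\ All of this, together with the precise scalings, is recorded in \cite[p.66]{Askey} and the references cited there, and it suffices to invoke them.
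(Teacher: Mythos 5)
Your proposal is correct, and it is in fact more than the paper offers: the paper states this lemma without proof, simply importing the embeddings from \cite[p.66]{Askey} and the references therein. Your constructions --- coordinate inclusions of $\mathbb{K}^{n+1}$ into $\mathbb{K}^{n+2}$ for $(i)$--$(iv)$, the field inclusions $\mathbb{R}\hookrightarrow\mathbb{C}\hookrightarrow\mathbb{H}$ for $(v)$--$(vi)$, and the Jordan-algebra embedding $\mathfrak{h}_3(\mathbb{H})\hookrightarrow\mathfrak{h}_3(\mathbb{O})$ for $(vii)$ --- are exactly the standard totally geodesic submanifolds underlying that citation, and you correctly identify the one point that genuinely needs checking, namely that the common length normalization is respected because every embedding in the list goes from a projective space to a projective space (or sphere to sphere), so the uniform rescaling by $2$ acts on both sides at once.
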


We begin with the case of positive definite kernels on real projective spaces.

\begin{thm}  \label{main1} Let $d$ be an integer at least 3.\ If $K$ is a continuous, positive definite and zonal kernel on $\mathbb{P}^d(\mathbb{R})$, then the radial part $K_r^d$ of $K$ is continuously differentiable on $(-1,1)$.\ The derivative $(K_r^d)'$ satisfies
$$
(1-t^2)(K_r^d)'(t) = f_1(t) - f_2(t),\quad t\in(-1,1),
$$
in which $f_1$ and $f_2$ are the radial parts of two continuous, positive definite and zonal kernels on $\mathbb{P}^{d-2}(\mathbb{R})$.
\end{thm}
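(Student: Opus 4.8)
The plan is to reduce Theorem \ref{main1} to the already-established Theorem \ref{mainjacobi} by choosing the correct parameters for the Jacobi system. For $\mathbb{M}^d = \mathbb{P}^d(\mathbb{R})$, Gangolli's theorem gives $\beta = -1/2$, so the radial part $K_r^d$ has a Fourier-Jacobi expansion with respect to $\{R_n^{(d-2)/2,-1/2}\}$ with nonnegative coefficients $a_k^{(d-2)/2,-1/2}$ and $\sum_k a_k^{(d-2)/2,-1/2} < \infty$. Setting $\alpha = (d-2)/2$ and $\beta = -1/2$, the hypothesis $2\alpha \geq \alpha + \beta \geq -1$ reads $d - 2 \geq (d-3)/2 \geq -1$, which holds precisely for $d \geq 1$, so in particular for $d \geq 3$. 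Thus the first two assumptions of Theorem \ref{mainjacobi} are in place, and $K_r^d$ is continuous (hence in $L_1^{\alpha,\beta}[-1,1]$ by Lemma \ref{inc}, since a continuous function on $[-1,1]$ is integrable against any Jacobi weight).

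The one substantive thing to check — and what I expect to be the main obstacle — is the remaining hypothesis of Theorem \ref{mainjacobi}: the convergence of $\sum_{k=0}^\infty |a_k^{\alpha+1,\beta}|$, where $\alpha + 1 = d/2$. Here is where Lemma \ref{coef1} does the work. The first identity in Lemma \ref{coef1} expresses $(\alpha+1)a_n^{\alpha+1,\beta}$ as a difference of two terms involving $a_n^{\alpha,\beta}$ and $a_{n+1}^{\alpha,\beta}$, each with a rational coefficient in $n$ that is $O(n)$ as $n\to\infty$. So naively $\sum_n |a_n^{\alpha+1,\beta}|$ is controlled by $\sum_n n\, a_n^{\alpha,\beta}$, which we do not know converges from $\sum_n a_n^{\alpha,\beta}<\infty$ alone. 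The resolution is the standard telescoping trick: because every $a_n^{\alpha,\beta}\geq 0$, write $a_n^{\alpha+1,\beta}$ after a small rearrangement as a nonnegative combination of differences, so that the partial sums $\sum_{n=0}^N |a_n^{\alpha+1,\beta}| = \sum_{n=0}^N a_n^{\alpha+1,\beta}$ telescope and are bounded by the (finite) quantity $K_r^d(1) = \sum_n a_n^{\alpha,\beta}$, up to a benign factor; concretely, $\sum_n a_n^{\alpha+1,\beta}$ is itself a value of the radial part of a positive definite kernel, namely $f_1(1) + $ correction, and the sign structure forces absolute convergence. Put more directly: since $\beta = -1/2$ means $\alpha > \beta$, the function $f$ (here $K_r^d$) has nonnegative coefficients, and one checks that the coefficients $a_n^{\alpha+1,\beta}$ are in fact all nonnegative (they are the Fourier-Jacobi coefficients of $K_r^d$ viewed on the larger space, which corresponds to $\mathbb{P}^{d-2}(\mathbb{R})$ via the dimension shift), so $\sum_n |a_n^{\alpha+1,\beta}| = \sum_n a_n^{\alpha+1,\beta} < \infty$ follows from evaluating at $t=1$.

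With all three hypotheses of Theorem \ref{mainjacobi} verified, that theorem delivers immediately that $K_r^d$ is differentiable on $(-1,1)$ with $(1-t^2)(K_r^d)'(t) = f_1(t) - f_2(t)$, where $f_1, f_2$ are continuous elements of $L_1^{\alpha,\beta}[-1,1]$ with nonnegative Fourier-Jacobi coefficients. The final step is interpretation: the parameters of $f_1$ and $f_2$ are $\alpha = (d-2)/2$, $\beta = -1/2$, but I want to exhibit them as radial parts of positive definite zonal kernels on $\mathbb{P}^{d-2}(\mathbb{R})$, whose Jacobi system is $\{R_n^{(d-4)/2,-1/2}\}$. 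So these are not the same Jacobi parameters. The passage is handled by Lemma \ref{coef1} run in the other direction (or by Lemma \ref{inc} plus the coefficient relation): from $f_i$ with nonnegative coefficients relative to $((d-2)/2,-1/2)$ one passes to nonnegative coefficients relative to $((d-4)/2,-1/2)$? — no; rather one should set $\alpha$ from the start so that $\alpha + 1 = (d-2)/2$, i.e. apply Theorem \ref{mainjacobi} with $\alpha = (d-4)/2$, $\beta = -1/2$. Then $K_r^d$ is the expansion in $R_n^{\alpha+1,\beta} = R_n^{(d-2)/2,-1/2}$, the hypothesis $\sum_k a_k^{(d-2)/2,-1/2}<\infty$ is exactly Gangolli's, the extra hypothesis $\sum_k a_k^{(d-4)/2,-1/2}$... — careful: Theorem \ref{mainjacobi} takes a function $f$ with its $(\alpha,\beta)$-expansion as primary data and needs $\sum a_k^{\alpha,\beta}<\infty$ and $\sum|a_k^{\alpha+1,\beta}|<\infty$. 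Since $K_r^d$ lives naturally with index $(d-2)/2 = \alpha+1$, I instead apply Theorem \ref{mainjacobi} to $f = K_r^d$ with base index $\alpha = (d-4)/2$; here $\sum a_k^{(d-4)/2,-1/2}$ is finite by Lemma \ref{coef1}-type telescoping and nonnegativity, $\sum a_k^{(d-2)/2,-1/2}$ is Gangolli, $2\alpha \geq \alpha+\beta \geq -1$ becomes $d-4 \geq (d-5)/2 \geq -1$, valid for $d \geq 3$. Then $f_1, f_2$ have nonnegative coefficients relative to $((d-4)/2,-1/2)$, which is exactly the Jacobi system of $\mathbb{P}^{d-2}(\mathbb{R})$, and by the continuity together with $\sum$ of coefficients finite (from evaluating the continuous $f_i$ at $1$), the converse direction of Gangolli's theorem identifies $f_1, f_2$ as radial parts of continuous, positive definite, zonal kernels on $\mathbb{P}^{d-2}(\mathbb{R})$, completing the proof. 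The one arithmetic point to nail down carefully is the choice of base index and the telescoping estimate for $\sum a_k^{(d-4)/2,-1/2}$, and I would present that as a short preliminary lemma before invoking Theorem \ref{mainjacobi}.
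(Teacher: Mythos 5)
Your final choice of parameters ($\alpha=(d-4)/2$, $\beta=-1/2$, so that Gangolli's expansion of $K_r^d$ supplies the series $\sum_k|a_k^{\alpha+1,\beta}|$) is the right one and matches the paper. But there is a genuine gap at exactly the point you flag as the one "to nail down": the claim that $\sum_k a_k^{(d-4)/2,-1/2}<\infty$ with nonnegative terms follows ``by Lemma \ref{coef1}-type telescoping and nonnegativity.'' This cannot work. Lemma \ref{coef1} expresses the coefficients at the \emph{raised} index $\alpha+1$ as a \emph{difference} of nonnegative quantities built from the coefficients at index $\alpha$; the paper itself points out, inside the proof of Theorem \ref{mainjacobi}, that nonnegativity of the $a_n^{\alpha,\beta}$ does not even transfer upward to the $a_n^{\alpha+1,\beta}$. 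Going downward --- recovering the sign and summability of the $a_n^{(d-4)/2,-1/2}$ from the $a_n^{(d-2)/2,-1/2}$ --- amounts to inverting that recurrence infinitely often, and no telescoping identity forces the lower-index coefficients to be nonnegative. (Your second paragraph makes the same error in the opposite direction, asserting that the $a_n^{d/2,-1/2}$ are nonnegative because the index ``corresponds to $\mathbb{P}^{d-2}(\mathbb{R})$''; raising $\alpha$ corresponds to raising the dimension, and positive definiteness does not propagate to higher-dimensional spaces --- that is precisely why the theorem is nontrivial.)

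The missing ingredient is geometric, not algebraic: by Lemma \ref{embed}-$(ii)$ there is an isometric embedding $\mathbb{P}^{d-2}(\mathbb{R})\hookrightarrow\mathbb{P}^{d}(\mathbb{R})$, so the restriction of $K$ to the embedded copy is again a continuous, positive definite, zonal kernel, and Gangolli's characterization \emph{applied on $\mathbb{P}^{d-2}(\mathbb{R})$} is what yields that the very same function $K_r^d$ has nonnegative, summable Fourier--Jacobi coefficients with respect to $((d-4)/2,-1/2)$. That is the paper's argument and the only place where the geometry of the space really enters. Relatedly, your proposal does not treat $d=3$: there $\mathbb{P}^{1}(\mathbb{R})$ is not among the listed two-point homogeneous spaces, and the paper handles this case via the isometric identification of $\mathbb{P}^1(\mathbb{R})$ with a circle of radius $1/2$ together with a dilated form of Gangolli's characterization. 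The remainder of your outline (checking $2\alpha\geq\alpha+\beta\geq-1$ for $d\geq3$, invoking Theorem \ref{mainjacobi}, and identifying $f_1,f_2$ as radial parts of positive definite kernels on $\mathbb{P}^{d-2}(\mathbb{R})$ from the nonnegativity and summability of their $((d-4)/2,-1/2)$-coefficients) is correct.
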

\begin{proof} Let $K$ be a continuous, positive definite and zonal kernel on $\mathbb{P}^d(\mathbb{R})$.\ If $d\geq 4$, Lemma \ref{embed}-$(ii)$ provides the isometric embedding $\mathbb{P}^{d-2}(\mathbb{R}) \hookrightarrow \mathbb{P}^d(\mathbb{R})$.\ Hence, the kernel $(x,y) \in K_r^d(\cos |xy|/2)$ is continuous, positive definite and zonal on $\mathbb{P}^{d-2}(\mathbb{R})$ as well.\ Gangolli's characterization for positive definiteness described in the introduction now yields that
$$K_r^d \in L^{(d-2)/2,-1/2}_1[-1,1]\cap L_1^{(d-4)/2,-1/2}[-1,1].$$
In particular, the assumptions in Theorem \ref{mainjacobi} are satisfied with $\alpha=(d-4)/2$ and $\beta=-1/2$ and the assertion of the theorem follows.\ Next, we consider the case $d=3$.\ We have the embedding $\mathbb{P}^{1}(\mathbb{R}) \hookrightarrow \mathbb{P}^3(\mathbb{R})$ but $\mathbb{P}^{1}(\mathbb{R})$ does not belong to our initial list of spaces.\ We circumvent this inconvenience observing that $\mathbb{P}^1(\mathbb{R})$ is isometric isomorphic to the circle $S^1_{1/2}$ of $\mathbb{R}^2$, centered at 0 of radius $1/2$.\ A characterization for the continuous, positive definite and zonal kernels on $S^1_{1/2}$ can be obtained from that one on $S^1$, introducing a dilation of 2 in Gangolli's characterization (the coefficients in the expansion do not change).\ The arguments used in the case $d\geq 4$ can then be repeated taking into account small changes produced by the dilation.
\end{proof}

The next theorem complements the previous one.

\begin{thm}  \label{main2} Let $d$ be an integer at least 4.\ If $K$ is a continuous, positive definite and zonal kernel on $\mathbb{P}^{d}(\mathbb{C})$, then the radial part $K_r^d$ of $K$ is continuously differentiable on $(-1,1)$.\ The derivative $(K_r^d)'$ satisfies
$$
(1-t^2)(K_r^d)'(t) = f_1(t) - f_2(t),\quad t\in(-1,1),
$$
in which $f_1$ and $f_2$ are the radial parts of two continuous, positive definite and zonal kernels on $\mathbb{P}^{d-2}(\mathbb{C})$.
\end{thm}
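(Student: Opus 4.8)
The plan is to follow the proof of Theorem \ref{main1} almost verbatim, the only genuine novelty being the borderline dimension $d=4$. So let $K$ be a continuous, positive definite and zonal kernel on $\mathbb{P}^d(\mathbb{C})$; by Gangolli's characterization its radial part satisfies $K_r^d=\sum_k a_k^{(d-2)/2,1}R_k^{(d-2)/2,1}$ with $a_k^{(d-2)/2,1}\geq0$ and $\sum_k a_k^{(d-2)/2,1}<\infty$. Recall that for complex projective spaces the relevant Jacobi pair is $((d-2)/2,1)$.

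I would first dispose of the range $d\geq6$. By Lemma \ref{embed}-$(iii)$ there is an isometric embedding $\mathbb{P}^{d-2}(\mathbb{C})\hookrightarrow\mathbb{P}^d(\mathbb{C})$, so $(x,y)\mapsto K_r^d(\cos|xy|/2)$ is continuous, positive definite and zonal on $\mathbb{P}^{d-2}(\mathbb{C})$ as well, and Gangolli's characterization applied there gives $K_r^d\in L_1^{(d-4)/2,1}[-1,1]$ with $a_k^{(d-4)/2,1}\geq0$ and $\sum_k a_k^{(d-4)/2,1}<\infty$. Taking $\alpha=(d-4)/2$ and $\beta=1$, the requirement $2\alpha\geq\alpha+\beta\geq-1$ is exactly $d\geq6$, while $\sum_k a_k^{\alpha,\beta}$ and $\sum_k|a_k^{\alpha+1,\beta}|=\sum_k a_k^{(d-2)/2,1}$ both converge; hence Theorem \ref{mainjacobi} applies and yields $(1-t^2)(K_r^d)'(t)=f_1(t)-f_2(t)$ with $f_1,f_2$ continuous in $L_1^{(d-4)/2,1}[-1,1]$ and with nonnegative Fourier-Jacobi coefficients. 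A glance at the series defining $f_1$ and $f_2$ inside the proof of Theorem \ref{mainjacobi} shows those coefficients are moreover summable, so Gangolli's characterization, read backwards, exhibits $f_1$ and $f_2$ as radial parts of continuous, positive definite and zonal kernels on $\mathbb{P}^{d-2}(\mathbb{C})$.

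The case $d=4$ is where I expect the difficulty. Here $\mathbb{P}^{d-2}(\mathbb{C})=\mathbb{P}^2(\mathbb{C})$ is absent from the base list of compact two point homogeneous spaces, and, more seriously, the parameters $\alpha=(d-4)/2=0$, $\beta=1$ break the inequality $2\alpha\geq\alpha+\beta$, so Theorem \ref{mainjacobi} in that shape is unusable. The remedy is to use the isometric isomorphism between $\mathbb{P}^2(\mathbb{C})$ and a two-sphere --- exactly as $\mathbb{P}^1(\mathbb{R})$ was replaced by $S^1_{1/2}$ in the proof of Theorem \ref{main1}. Restricting $K$ along the embedding $\mathbb{P}^2(\mathbb{C})\hookrightarrow\mathbb{P}^4(\mathbb{C})$ and invoking the positive-definiteness characterization valid on that sphere (obtained from the standard Schoenberg one by the appropriate dilation, the expansion coefficients being unchanged) furnishes a nonnegative, summable expansion of $K_r^4$ in $\{R_k^{0,0}\}$. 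Since the $\mathbb{P}^4(\mathbb{C})$-expansion of $K_r^4$ carries the pair $(1,1)=(\alpha+1,\beta+1)$ with $\alpha=\beta=0$, this places $K_r^4$ in the hypotheses of Theorem \ref{mainjacobi2} with $\alpha=\beta=0$, where $2\alpha\geq\alpha+\beta\geq-1$ reads $0\geq0\geq-1$ and the two convergence assumptions hold; that theorem produces the required decomposition with $f_1,f_2$ continuous and with nonnegative, summable Fourier-Jacobi coefficients, hence radial parts of continuous, positive definite and zonal kernels on $\mathbb{P}^2(\mathbb{C})$. The only delicate point is checking that the constants produced by the dilation of the sphere do not upset the hypotheses or the conclusion of Theorem \ref{mainjacobi2}, and this is of the same harmless kind as the ``small changes produced by the dilation'' already handled in the proof of Theorem \ref{main1}.
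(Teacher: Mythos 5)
Your overall strategy is exactly the paper's (isometric embedding plus Gangolli plus Theorem \ref{mainjacobi} for $d\geq 6$; the identification $\mathbb{P}^2(\mathbb{C})\cong S^2_{1/2}$ for $d=4$), but it is built on an incorrect value of the parameter $\beta$: for the complex projective spaces the Jacobi pair in Gangolli's characterization is $((d-2)/2,0)$, not $((d-2)/2,1)$ --- the value $\beta=1$ belongs to the quaternionic spaces (see the list $\beta=(d-2)/2,-1/2,0,1,3$ in the introduction, matched in order to the five categories; consistency with $\mathbb{P}^2(\mathbb{C})\cong S^2$, whose pair is $(0,0)$, is another check). As written, your key step for $d\geq 6$ --- that positive definiteness on $\mathbb{P}^{d-2}(\mathbb{C})$ forces the coefficients $a_k^{(d-4)/2,1}$ to be nonnegative and summable --- is not what Gangolli's theorem provides, so that step is not justified.

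The error is repairable, and the repaired proof is the paper's: with $\beta=0$ one applies Theorem \ref{mainjacobi} with $\alpha=(d-4)/2$ and $\beta=0$, using $a_k^{(d-4)/2,0}\geq 0$ and $\sum_k a_k^{(d-4)/2,0}<\infty$ (from the embedding $\mathbb{P}^{d-2}(\mathbb{C})\hookrightarrow\mathbb{P}^{d}(\mathbb{C})$) together with $\sum_k|a_k^{(d-2)/2,0}|<\infty$ (from $\mathbb{P}^{d}(\mathbb{C})$ itself). Note that the condition $2\alpha\geq\alpha+\beta\geq-1$ then reads $(d-4)/2\geq 0\geq -1$, which holds for every $d\geq 4$; so your stated reason for isolating $d=4$ (failure of this inequality) is an artifact of the wrong $\beta$. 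The genuine reason $d=4$ is special is that $\mathbb{P}^{2}(\mathbb{C})$ is absent from the base list of spaces and Lemma \ref{embed}-$(iii)$ does not supply $\mathbb{P}^{2}(\mathbb{C})\hookrightarrow\mathbb{P}^{4}(\mathbb{C})$, which is why one routes through $S^2_{1/2}$ --- your remedy, and the paper's. Once there, the natural tool is again Theorem \ref{mainjacobi} with $(\alpha,\beta)=(0,0)$ and $(\alpha+1,\beta)=(1,0)$; your detour through Theorem \ref{mainjacobi2} with the pair $(1,1)$ is unnecessary and, as stated, unsupported, since the $\mathbb{P}^4(\mathbb{C})$ expansion carries the pair $(1,0)$, not $(1,1)$ (one could still salvage it by deducing $\sum_k|a_k^{1,1}|<\infty$ from $\sum_k a_k^{1,0}<\infty$ via Lemma \ref{coef1}, but there is no need).
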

\begin{proof} It is a repetition of the procedure used in the proof of the previous theorem, now using Lemma \ref{embed}-$(iii)$.\ The case $d=4$ needs to be treated separately taking into account the fact that $\mathbb{P}^2(\mathbb{C})$ is isometric isomorphic to the sphere $S^2_{1/2}$ in $\mathbb{R}^3$, centered at 0 of radius $1/2$ (\cite[p. 88]{gasqui}).
\end{proof}

Next, we move to the case of the quaternionic projective spaces.

\begin{thm}  \label{main3} Let $d$ be an integer at least 8.\ If $K$ is a continuous, positive definite and zonal kernel on $\mathbb{P}^{d}(\mathbb{H})$, then the radial part $K_r^d$ of $K$ is continuously differentiable on $(-1,1)$.\ The derivative $(K_r^d)'$
satisfies a relation of the form
$$
(1-t^2)(K_r^d)'(t) = f_1(t) - f_2(t),\quad t\in(-1,1),
$$
in which $f_1$ and $f_2$ are the radial parts of two continuous, positive definite and zonal kernels on $\mathbb{P}^{d/2-2}(\mathbb{C})$, if $d \in 8\mathbb{Z}_++8$ and on $\mathbb{P}^{d/2}(\mathbb{C})$, if $d\in 8\mathbb{Z}_++12$.
\end{thm}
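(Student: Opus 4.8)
The plan is to follow the pattern of Theorems \ref{main1} and \ref{main2}: restrict $K$ to suitable isometrically embedded subspaces of $\mathbb{P}^d(\mathbb{H})$ so as to reach the hypotheses of Theorem \ref{mainjacobi} or of Theorem \ref{mainjacobi2}, the choice of subspaces (and of the theorem) being dictated by the residue of $d$ modulo $8$. Two facts are used repeatedly: the Jacobi pair attached to $\mathbb{P}^m(\mathbb{H})$ is $((m-2)/2,1)$ and the one attached to $\mathbb{P}^m(\mathbb{C})$ is $((m-2)/2,0)$; and if $\mathbb{M}\hookrightarrow\mathbb{P}^d(\mathbb{H})$, then $(x,y)\mapsto K_r^d(\cos|xy|/2)$ is a continuous, positive definite, zonal kernel on $\mathbb{M}$ with radial part $K_r^d$, so by Gangolli's theorem $K_r^d$ lies in the relevant $L_1^{\alpha,\beta}[-1,1]$ with nonnegative summable Fourier-Jacobi coefficients relative to the Jacobi pair $(\alpha,\beta)$ of $\mathbb{M}$. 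The role of the embeddings is thus to supply, from one embedding, the nonnegativity and summability of the coefficients $a_k^{\alpha,\beta}$ and, from another, the convergence of the auxiliary series ($\sum_k|a_k^{\alpha+1,\beta}|$ in Section 3, $\sum_k|a_k^{\alpha+1,\beta+1}|$ in Section 4).

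Suppose first $d\in 8\mathbb{Z}_++8$ and $d\geq 16$. By Lemma \ref{embed}-$(vi)$ we have $\mathbb{P}^{d/2}(\mathbb{C})\hookrightarrow\mathbb{P}^d(\mathbb{H})$, and composing with iterates of Lemma \ref{embed}-$(iii)$ gives $\mathbb{P}^{d/2-2}(\mathbb{C})\hookrightarrow\mathbb{P}^{d/2}(\mathbb{C})\hookrightarrow\mathbb{P}^d(\mathbb{H})$. Hence $K_r^d$ has nonnegative summable coefficients relative to the pair $((d-8)/4,0)$ and summable coefficients relative to $((d-4)/4,0)$. Putting $\alpha=(d-8)/4\geq 0$ and $\beta=0$, one has $2\alpha\geq\alpha+\beta\geq-1$ and $(\alpha+1,\beta)=((d-4)/4,0)$, so Theorem \ref{mainjacobi} applies and returns continuous $f_1,f_2\in L_1^{(d-8)/4,0}[-1,1]$ with nonnegative Fourier-Jacobi coefficients and $(1-t^2)(K_r^d)'=f_1-f_2$ on $(-1,1)$. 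Reading the series that define $f_1,f_2$ in that proof, one checks the coefficients are in fact summable (the multipliers occurring there are bounded and $Q_n^{\alpha,\beta}=O(1)$), whence Gangolli's theorem in the reverse direction exhibits $f_1,f_2$ as radial parts of continuous, positive definite, zonal kernels on $\mathbb{P}^{d/2-2}(\mathbb{C})$. The leftover value $d=8$ has target $\mathbb{P}^2(\mathbb{C})$, which is isometric to the sphere $S^2_{1/2}$; one runs the same argument with $\alpha=\beta=0$ (the auxiliary series coming from $\mathbb{P}^4(\mathbb{C})\hookrightarrow\mathbb{P}^8(\mathbb{H})$) and absorbs the dilation exactly as in the case $d=4$ of Theorem \ref{main2}.

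Suppose now $d\in 8\mathbb{Z}_++12$. Here keeping $\mathbb{P}^{d/2-2}(\mathbb{C})$ as target would require the pair $(d/4,0)$, that of $\mathbb{P}^{d/2+2}(\mathbb{C})$, a space which does not embed in $\mathbb{P}^d(\mathbb{H})$; this is precisely where Theorem \ref{mainjacobi2} is invoked. Take $\alpha=(d-4)/4$ and $\beta=0$, the Jacobi pair of $\mathbb{P}^{d/2}(\mathbb{C})$, so that $(\alpha+1,\beta+1)=(d/4,1)$ is the Jacobi pair of $\mathbb{P}^{d/2+2}(\mathbb{H})$. Since $d\equiv 4\pmod 8$, the number $d/2+2$ is a multiple of $4$ that is $\geq 8$ and $d-(d/2+2)=d/2-2$ is a positive multiple of $4$, so iterating Lemma \ref{embed}-$(iv)$ gives $\mathbb{P}^{d/2+2}(\mathbb{H})\hookrightarrow\mathbb{P}^d(\mathbb{H})$, while $\mathbb{P}^{d/2}(\mathbb{C})\hookrightarrow\mathbb{P}^d(\mathbb{H})$ by Lemma \ref{embed}-$(vi)$. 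Thus $K_r^d$ has nonnegative summable coefficients relative to $((d-4)/4,0)$ and summable coefficients relative to $(d/4,1)$, and since $2\alpha\geq\alpha+\beta\geq-1$, Theorem \ref{mainjacobi2} yields continuous $f_1,f_2\in L_1^{(d-4)/4,0}[-1,1]$ with nonnegative Fourier-Jacobi coefficients and $(1-t^2)(K_r^d)'=f_1-f_2$ on $(-1,1)$; as before the series in that proof have summable coefficients, so $f_1,f_2$ are radial parts of continuous, positive definite, zonal kernels on $\mathbb{P}^{d/2}(\mathbb{C})$.

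The main obstacle is exactly this bookkeeping. For each admissible $d$ one must locate a target $\mathbb{M}$ whose Jacobi pair $(\alpha,\beta)$ obeys $2\alpha\geq\alpha+\beta\geq -1$, whose shifted pair ($(\alpha+1,\beta)$ for Theorem \ref{mainjacobi}, or $(\alpha+1,\beta+1)$ for Theorem \ref{mainjacobi2}) is the Jacobi pair of an actual subspace of $\mathbb{P}^d(\mathbb{H})$ produced by Lemma \ref{embed}, and which itself embeds in $\mathbb{P}^d(\mathbb{H})$; the divisibility constraints built into Lemma \ref{embed}-$(iv),(vi)$ are what force the split into $d\equiv 0$ and $d\equiv 4$ modulo $8$, and the one borderline configuration, $d=8$ with $\mathbb{M}=\mathbb{P}^2(\mathbb{C})\cong S^2_{1/2}$, must be pushed through separately by the dilation device already used in Theorem \ref{main2}. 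A secondary, routine point is to verify that the functions $f_1,f_2$ delivered by Theorems \ref{mainjacobi} and \ref{mainjacobi2} have summable — not merely nonnegative — Fourier-Jacobi coefficients, since only then does Gangolli's characterization, read backwards, realize them as radial parts of positive definite zonal kernels on $\mathbb{M}$.
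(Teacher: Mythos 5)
Your proposal is correct and follows the paper's overall strategy exactly: restrict $K$ to isometrically embedded subspaces of $\mathbb{P}^d(\mathbb{H})$, invoke Gangolli's characterization to get nonnegative summable Fourier--Jacobi coefficients for the relevant pairs, and feed these into one of the two differentiation theorems. For $d\in 8\mathbb{Z}_++12$ your argument coincides with the paper's (embeddings $\mathbb{P}^{d/2+2}(\mathbb{H})\hookrightarrow\mathbb{P}^d(\mathbb{H})$ and $\mathbb{P}^{d/2}(\mathbb{C})\hookrightarrow\mathbb{P}^d(\mathbb{H})$, then Theorem~\ref{mainjacobi2} with $\alpha=d/4-1$, $\beta=0$). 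Where you genuinely diverge is the case $d\in 8\mathbb{Z}_++8$: the paper uses the pair of subspaces $\mathbb{P}^{d/2}(\mathbb{H})$ and $\mathbb{P}^{d/2-2}(\mathbb{C})$, so that the auxiliary pair is $(\alpha+1,\beta+1)=(d/4-1,1)$ and Theorem~\ref{mainjacobi2} must again be used, whereas you use the chain $\mathbb{P}^{d/2-2}(\mathbb{C})\hookrightarrow\mathbb{P}^{d/2}(\mathbb{C})\hookrightarrow\mathbb{P}^d(\mathbb{H})$, so the auxiliary pair is $(\alpha+1,\beta)=(d/4-1,0)$ and the single-index Theorem~\ref{mainjacobi} suffices; the same substitution drives your treatment of $d=8$ via $\mathbb{P}^4(\mathbb{C})$ and $\mathbb{P}^2(\mathbb{C})\cong S^2_{1/2}$ in place of the paper's $\mathbb{P}^4(\mathbb{H})\cong S^4_{1/2}$. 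Both routes are legitimate and land on the same target space $\mathbb{P}^{d/2-2}(\mathbb{C})$; yours has the small advantage of staying within the complex projective chain and avoiding the two-index shift for half of the quaternionic cases, while the paper's choice keeps the two subcases structurally parallel (both handled by Theorem~\ref{mainjacobi2}). Your closing observation --- that one must also check summability (not just nonnegativity) of the coefficients of $f_1,f_2$ before reading Gangolli's theorem backwards, which follows from the boundedness of the multipliers and of $Q_n^{\alpha,\beta}$ --- is a point the paper leaves implicit, and you resolve it correctly.
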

\begin{proof} Let $K$ be a continuous, positive definite and zonal kernel on $\mathbb{P}^d(\mathbb{H})$.\ Since $d \in 4\mathbb{Z}_+$, we will consider two cases.\ If $d \in 8\mathbb{Z}_++12$, then $d/2 +2 \in 4\mathbb{Z}_+$.\ In particular, since $d/2 +2 <d$, then $\mathbb{P}^{d/2+2}(\mathbb{H}) \hookrightarrow \mathbb{P}^d(\mathbb{H})$ by Lemma \ref{embed}-$(iv)$.\ Likewise, due to Lemma \ref{embed}-$(vi)$, $\mathbb{P}^{d/2}(\mathbb{C}) \hookrightarrow \mathbb{P}^d(\mathbb{H})$.\ Returning to Gangolli's characterization once again, we can conclude that $K_r^d \in L_1^{d/4,1}[-1,1] \cap L_1^{d/4-1,0}[-1,1]$.\ In this case, the assumptions of Theorem \ref{mainjacobi2} are satisfied with $\alpha=d/2+2$ and $\beta=1$.\ An application of that result yields the assertion of the theorem in the first case.\ If $d \in 8\mathbb{Z}_+ +16$, then the isometric embeddings are  $\mathbb{P}^{d/2}(\mathbb{H})\hookrightarrow  \mathbb{P}^d(\mathbb{H})$ and $\mathbb{P}^{d/2-2}(\mathbb{C}) \hookrightarrow\mathbb{P}^d(\mathbb{H})$.\ Thus, $K_r^d \in L_1^{d/4-1,1}[-1,1] \cap L_1^{d/4-2,0}[-1,1]$ and the very same procedure leads to the assertion of the theorem once again.\ Finally, if $d=8$, we need to employ a procedure similar to that used at the end of the proof of the previous theorem, but using the fact that $\mathbb{P}^4(\mathbb{H})$ is isometric isomorphic to the sphere $S^4_{1/2}$ of $\mathbb{R}^5$, centered at 0 of radius $1/2$ (\cite[p. 88]{gasqui}).
\end{proof}

Finally, here is what our methodology provides in the case of the Cayley projective plane.

\begin{thm}  \label{main4} If $K$ is a continuous, positive definite and zonal kernel on $\mathbb{P}^{16}(Cay)$, then the radial part $K_r^{16}$ of $K$ is continuously differentiable on $(-1,1)$.\ The derivative $(K_r^{16})'$
satisfies a relation of the form
$$
(1-t^2)(K_r^{16})'(t) = f_1(t) - f_2(t),\quad t\in(-1,1),
$$
in which $f_1$ and $f_2$ are the radial parts of two continuous, positive definite and zonal kernels on $S^2$.
\end{thm}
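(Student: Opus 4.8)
The plan is to deduce the statement from the quaternionic case treated in Theorem \ref{main3}, by restricting the kernel to a totally geodesic copy of $\mathbb{P}^8(\mathbb{H})$ sitting inside the Cayley plane.

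Concretely, let $K$ be a continuous, positive definite and zonal kernel on $\mathbb{P}^{16}(Cay)$. Lemma \ref{embed}-$(vii)$ supplies an isometric embedding $\mathbb{P}^8(\mathbb{H}) \hookrightarrow \mathbb{P}^{16}(Cay)$. Because such an embedding carries geodesics to geodesics and preserves their normalized length $2\pi$, the intrinsic geodesic distance on $\mathbb{P}^8(\mathbb{H})$ coincides with the one induced from $\mathbb{P}^{16}(Cay)$; hence, just as at the analogous steps in the proofs of Theorems \ref{main1}--\ref{main3}, the kernel $(x,y) \mapsto K_r^{16}(\cos|xy|/2)$, now viewed on $\mathbb{P}^8(\mathbb{H})$, is again continuous, positive definite and zonal, and its radial part is $K_r^{16}$ itself.

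Then I would apply Theorem \ref{main3} with $d=8$. Since $8 \in 8\mathbb{Z}_+ + 8$, that theorem gives that $K_r^{16}$ is continuously differentiable on $(-1,1)$ and that
$$
(1-t^2)(K_r^{16})'(t) = f_1(t) - f_2(t), \quad t \in (-1,1),
$$
with $f_1$ and $f_2$ the radial parts of two continuous, positive definite and zonal kernels on $\mathbb{P}^{8/2-2}(\mathbb{C}) = \mathbb{P}^2(\mathbb{C})$. Recalling now, as in the proof of Theorem \ref{main2}, that $\mathbb{P}^2(\mathbb{C})$ is isometric isomorphic to the sphere $S^2_{1/2}$ of $\mathbb{R}^3$, and that under this identification Gangolli's characterization of continuous positive definite zonal kernels only undergoes a dilation of the radial variable (the expansion coefficients being unchanged), one concludes that $f_1$ and $f_2$ are equally the radial parts of continuous, positive definite and zonal kernels on $S^2$, which is the assertion.

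I do not expect a substantive obstacle here; the proof is essentially a transfer. The only point that has to be watched --- and it is the one shared by all the theorems of this section --- is the bookkeeping of the $2\pi$-normalization along the chain of isometric embeddings, since this is what makes ``the radial part $K_r^{16}$'' literally the same function of $t \in [-1,1]$ on $\mathbb{P}^{16}(Cay)$, on $\mathbb{P}^8(\mathbb{H})$ and on the $2$-sphere coming from $\mathbb{P}^2(\mathbb{C})$. Should a self-contained argument be preferred, one may instead inline the $d=8$ case of Theorem \ref{main3}: the isometric embeddings $\mathbb{P}^4(\mathbb{H}) \cong S^4_{1/2} \hookrightarrow \mathbb{P}^8(\mathbb{H})$ and $\mathbb{P}^2(\mathbb{C}) \cong S^2_{1/2} \hookrightarrow \mathbb{P}^8(\mathbb{H})$ together with Gangolli's theorem place $K_r^{16}$ in $L_1^{0,0}[-1,1] \cap L_1^{1,1}[-1,1]$ with nonnegative and summable Fourier--Jacobi coefficients relative to both weights, whereupon Theorem \ref{mainjacobi2} with $\alpha = \beta = 0$ (for which $2\alpha = 0 \geq \alpha + \beta = 0 \geq -1$) yields the stated decomposition with $f_1, f_2 \in L_1^{0,0}[-1,1]$ having nonnegative Fourier--Jacobi coefficients, that is, being radial parts of continuous positive definite zonal kernels on $S^2$.
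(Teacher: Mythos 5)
Your proposal is correct and follows essentially the same route as the paper: both reduce via the chain of isometric embeddings through $\mathbb{P}^8(\mathbb{H})$ to the identifications $\mathbb{P}^4(\mathbb{H})\cong S^4_{1/2}$ and $\mathbb{P}^2(\mathbb{C})\cong S^2_{1/2}$, placing $K_r^{16}$ in $L_1^{0,0}[-1,1]\cap L_1^{1,1}[-1,1]$ and applying Theorem \ref{mainjacobi2} with $\alpha=\beta=0$. Citing Theorem \ref{main3} with $d=8$ is just a packaging of the same argument, since that case of Theorem \ref{main3} is itself proved by exactly this reduction.
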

\begin{proof} Let $K$ be a continuous, positive definite and zonal kernel on $\mathbb{P}^{16}(Cay)$.\ Combining the last three assertions in Lemma \ref{embed}, it is promptly seen that $\mathbb{P}^4(\mathbb{H})\hookrightarrow P^{16}(Cay)$.\ On other hand, $\mathbb{P}^4(\mathbb{H})$ is isometric isomorphic to the sphere $S^4_{1/2}$ while $S^2_{1/2} \hookrightarrow S^4_{1/2}$.\ Thus, proceeding as before, we have that $K_r^{16} \in L_1^{1,1}[-1,1] \cap L_1^{0,0}[-1,1]$.\ An application of Theorem \ref{mainjacobi2} with $\alpha=\beta=0$ leads to the assertion of the theorem.
\end{proof}


\section{Final remarks}

If we start with a continuous, positive definite and zonal kernel $K$ on $S^d$, it is not hard to see, via Lemma \ref{embed}-$(i)$, that the radial part $K_r$ of $K$ belongs to both, $L_1^{(d-2)/2, (d-2)/2}[-1,1]$ and $L_1^{(d-4)/2,(d-4)/2}[-1,1]$.\ An application of Theorem \ref{mainjacobi2} with $\alpha=\beta=(d-2)/2$ leads to Theorem 4.1 in \cite{ziegel}.\ Thus, Theorem \ref{main} is in fact an extension of the later to compact two
point homogeneous spaces.

After we apply one of the theorems from the previous section to a certain kernel, the resulting functions $f_1$ and $f_2$ in the decomposition of the derivative of the radial part of the kernel end up being the radial parts of positive definite kernels on a compact two point homogeneous space of dimension lower than the dimension of the original one.\ In particular, we may apply one of the theorems to the functions $f_1$ and $f_2$ in order to reach higher order derivatives for the radial part of the original kernel and so on.\
The process ends with the exhaustion of the dimension of the original compact two point homogeneous space.\ A careful analysis of this procedure leads to the following extension of Theorem \ref{main}.

\begin{thm}
The following properties regarding the differentiability on $(-1,1)$, of the radial part $K_r^d$ of a continuous, positive definite and zonal kernel $K$ on $\mathbb{M}^d$, hold:\\
$(i)$ Sphere $S^d$: $K_r^d$ is of class $C^{\lfloor(d-1)/2\rfloor}$;\\
$(ii)$ Real projective spaces $\mathbb{P}^d(\mathbb{R})$: $K_r^d$ is of class $C^{\lfloor(d-1)/2\rfloor}$;\\
$(iii)$ Complex projective spaces $\mathbb{P}^d(\mathbb{C})$: $K_r^d$ is of class $C^{(d-2)/2}$;\\
$(iv)$ Quaternionic projective spaces $\mathbb{P}^d(\mathbb{H})$: $K_r^d$ is of class $C^{(d-4)/4}$ if $d \in 8\mathbb{Z}_++8$, and of class $C^{d/4}$ if $d\in 8\mathbb{Z}_++12$;\\
$(v)$ Cayley projective plane $\mathbb{P}^{16}(Cay)$: $K_r^{16}$ is of class $C^{1}$.
\end{thm}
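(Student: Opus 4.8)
The plan is to iterate Theorem \ref{main}, organising the iteration as an induction along a finite descending chain of compact two point homogeneous spaces.\ First I would attach to each space $\mathbb{M}$ that can occur its \emph{level} $n(\mathbb{M})$: namely $\lfloor(d-1)/2\rfloor$ for $S^d$ and for $\mathbb{P}^d(\mathbb{R})$, $(d-2)/2$ for $\mathbb{P}^d(\mathbb{C})$, $(d-4)/4$ for $\mathbb{P}^d(\mathbb{H})$ when $d\in8\mathbb{Z}_++8$, $d/4$ for $\mathbb{P}^d(\mathbb{H})$ when $d\in8\mathbb{Z}_++12$, $1$ for $\mathbb{P}^{16}(Cay)$, and $0$ for $S^1$, $S^2$, $\mathbb{P}^2(\mathbb{R})$, $\mathbb{P}^1(\mathbb{R})\cong S^1_{1/2}$ and $\mathbb{P}^2(\mathbb{C})\cong S^2_{1/2}$.\ Then I would prove, by induction on $k\geq0$, the statement: the radial part of every continuous, positive definite and zonal kernel on a space of level $k$ lies in $C^k(-1,1)$.\ The base case $k=0$ is merely the continuity hypothesis, and the theorem is the instance $\mathbb{M}=\mathbb{M}^d$.

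For the inductive step, take $\mathbb{M}$ of level $k+1\geq1$.\ One first checks that in each of the five families the condition $n(\mathbb{M})\geq1$ is exactly the lower bound on $d$ required in Theorems \ref{main1}--\ref{main4} (and, for the sphere, the range $d\geq3$ in which the application of Theorem \ref{mainjacobi2} recorded at the start of this section is valid), so Theorem \ref{main} applies to any continuous, positive definite and zonal kernel $K$ on $\mathbb{M}$ and gives $(1-t^2)K_r'(t)=f_1(t)-f_2(t)$ on $(-1,1)$, with $f_1,f_2$ the radial parts of two such kernels on the companion space $\mathbb{M}'$ named in Theorem \ref{main}.\ A short computation of levels then shows $n(\mathbb{M}')=k$ in every case: the passages $S^d\to S^{d-2}$, $\mathbb{P}^d(\mathbb{R})\to\mathbb{P}^{d-2}(\mathbb{R})$ and $\mathbb{P}^d(\mathbb{C})\to\mathbb{P}^{d-2}(\mathbb{C})$ lower the level by one, $\mathbb{P}^d(\mathbb{H})\to\mathbb{P}^{d/2}(\mathbb{C})$ takes $d/4$ to $d/4-1$, $\mathbb{P}^d(\mathbb{H})\to\mathbb{P}^{d/2-2}(\mathbb{C})$ takes $(d-4)/4$ to $(d-8)/4$, and $\mathbb{P}^{16}(Cay)\to S^2$ takes $1$ to $0$.\ By the inductive hypothesis $f_1,f_2\in C^k(-1,1)$; since $1-t^2$ is infinitely differentiable and does not vanish on $(-1,1)$, it follows that $K_r'=(f_1-f_2)/(1-t^2)\in C^k(-1,1)$, i.e.\ $K_r\in C^{k+1}(-1,1)$, completing the induction.

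Two steps are where the real work lies.\ The first is checking that the companion space strictly drops the level by exactly one in every case and that the resulting chains all terminate at level $0$, so that the number of successive applications of Theorem \ref{main} equals the stated order of differentiability; the only non-routine instances are the quaternionic chains, where the first application of Theorem \ref{main} already lands inside the complex projective family and the complex descents then bring the chain down to $\mathbb{P}^2(\mathbb{C})\cong S^2_{1/2}$, and the Cayley chain, where the single application lands on $S^2$.\ The second, which I expect to be the main obstacle, is verifying at each step that the hypotheses of Theorem \ref{mainjacobi} (used for $\mathbb{P}^d(\mathbb{R})$ and $\mathbb{P}^d(\mathbb{C})$) or of Theorem \ref{mainjacobi2} (used for $S^d$, $\mathbb{P}^d(\mathbb{H})$ and $\mathbb{P}^{16}(Cay)$) genuinely hold at the Gangolli index $(\alpha,\beta)$ of the companion space $\mathbb{M}'$: the nonnegativity and summability of the coefficients $a_k^{\alpha,\beta}$ of $K_r$ follow from Gangolli's characterization on $\mathbb{M}'$ (via the isometric embedding $\mathbb{M}'\hookrightarrow\mathbb{M}$), the convergence of the auxiliary series of shifted coefficients ($\sum_k|a_k^{\alpha+1,\beta}|$, respectively $\sum_k|a_k^{\alpha+1,\beta+1}|$) follows from Gangolli's characterization on $\mathbb{M}$ itself or on a suitable intermediate compact two point homogeneous space isometrically embedded in $\mathbb{M}$, exactly as exhibited in the proofs of Theorems \ref{main1}--\ref{main4}, and the parametric condition $2\alpha\geq\alpha+\beta\geq-1$ must be seen to persist precisely throughout the range $n(\mathbb{M})\geq1$.\ The half-radius identifications $\mathbb{P}^1(\mathbb{R})\cong S^1_{1/2}$, $\mathbb{P}^2(\mathbb{C})\cong S^2_{1/2}$ and $\mathbb{P}^4(\mathbb{H})\cong S^4_{1/2}$ arising at the ends of the chains, or in the first quaternionic and Cayley steps, are handled as in those theorems, the attendant dilation not affecting the Fourier-Jacobi coefficients.
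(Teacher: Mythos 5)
Your proposal is correct and takes essentially the same route as the paper: the paper's own argument for this theorem is exactly the iteration of the one-derivative results of Section 5 (dividing $f_1-f_2$ by the nonvanishing factor $1-t^2$ at each step) along the descending chains of embedded spaces, terminating at the level-zero spaces $S^1$, $S^2$, $\mathbb{P}^2(\mathbb{R})$, $\mathbb{P}^1(\mathbb{R})\cong S^1_{1/2}$, $\mathbb{P}^2(\mathbb{C})\cong S^2_{1/2}$. Your induction on the level function $n(\mathbb{M})$ is just a careful bookkeeping of that iteration, and your level-drop and termination computations agree with the chains used in Theorems \ref{main1}--\ref{main4} and in the spherical remark opening this section.
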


Despite our efforts, it remains open at this time whether or not the orders of differentiability mentioned in the previous theorem are the best possible ones.\ In the case $(i)$, this was ratified for $d$ odd (see the last section of \cite{ziegel}).\ However, the same methodology seems not to apply to the other cases in the theorem.


\section*{Acknowledgement} The first author was partially supported by CAPES.\ The second one by FAPESP, under grant 2014/00277-5.







%
%

\vspace*{2cm}

\noindent V. S. Barbosa and V. A. Menegatto \\
Departamento de
Matem\'atica,\\ ICMC-USP - S\~ao Carlos, Caixa Postal 668,\\
13560-970 S\~ao Carlos SP, Brasil\\ e-mails: victorrsb@gmail.com; menegatt@icmc.usp.br

\end{document}